\title{\sc Regularity Results for Generalized Electro-Magnetic Problems}
\def\shorttitle{Regularity Results for Generalized Electro-Magnetic Problems}
\def\pauthor{Peter Kuhn and Dirk Pauly}
\def\mylabelonoff{off}
\def\allowdisbrk{no}
\author{{\sf\pauthor}}
\markboth{\pauthor}{\shorttitle}
\numberwithin{equation}{section}
\newenvironment{acknow}{{\vspace*{1cm}\noindent\bf Acknowledgements }}{}
\newcommand{\bewboxw}{\mbox{}\hfill $\square$ \\}
\newenvironment{proof}{{\noindent\bf Proof }}{\bewboxw}
\newcommand{\keywords}[1]{{\noindent\bf Key Words }#1}
\newcommand{\amsclass}[1]{{\noindent\bf AMS MSC-Classifications }#1}
\newcommand{\mylabel}[1]{\label{#1}\fbox{{\rm #1}}}}{\newcommand{\mylabel}[1]{\label{#1}\makebox[0mm][]{}}}
\newcommand{\paper}[7]{\bibitem{#1} #2, `#3', {\it #4}, #5, (#6), #7.}
\newcommand{\book}[6]{\bibitem{#1} #2, {\it #3}, #4, #5, (#6).}
\newcommand{\dissavail}[6]{\bibitem{#1} #2, `#3', {\sf Dissertation}, #4, (#5), available from {\tt #6}.}
\newcommand{\habil}[5]{\bibitem{#1} #2, `#3', {\sf Habilitationsschrift}, #4, (#5).}
\newcommand{\schluss}{\ifodd\value{page}\newpage\thispagestyle{empty}\makebox[0mm][]{}\color{sehrhell}.\fi

\def \new{\newline}
\def \abstandk{\mbox{\  } \vspace{0.3cm} \mbox{\ } \new \noindent }
\def \abstand{\mbox{\  } \vspace{0.5cm} \mbox{\ } \new \noindent }
\def \xhsp{\hspace{-0.1cm}}
\def \xhspp{\hspace{-0.2cm}}
\def \xhsppp{\hspace{-0.3cm}}
\def \xhspppp{\hspace{-0.4cm}}
\def \innen{{\rm int}}
\def \vp{\varphi}
\def \Lra{\Leftrightarrow}
\def \mit{\ |\ }
\def \ra{\rightarrow}
\def \mfur{\mbox{ f\"ur }}

\def \vp{\varphi}
\def \eps{\varepsilon}

\def \cals{{\cal S}}
\def \calj{{\cal J}}
\def \calf{{\cal F}}

\newcommand{\xo}{\Omega}
\newcommand{\xpo}{\partial\Omega}
\newcommand{\xs}{S}
\newcommand{\xw}{W}
\newcommand{\xu}{U}
\newcommand{\xolu}{\ol{\xu}}
\newcommand{\xv}{V}
\newcommand{\xolv}{\ol{\xv}}
\newcommand{\xvs}{\xv'}
\newcommand{\xws}{\xw'}
\newcommand{\xm}{\mathrm{M}}
\newcommand{\xtildem}{\tilde{\xm}}
\newcommand{\xos}{\xo'}
\newcommand{\xtildeo}{\tilde{\xo}}
\newcommand{\von}[1]{{(#1)}}
\newcommand{\vons}{\von{S}}
\newcommand{\vonols}{\von{\ol{S}}}
\newcommand{\vonu}{\von{U}}
\newcommand{\vonm}{\von{\xm}}
\newcommand{\vonx}{\von{x}}
\newcommand{\vont}{\von{T}}
\newcommand{\vono}{\von{\Omega}}
\newcommand{\vonpo}{\von{\partial\Omega}}
\newcommand{\vonos}{\von{\Omega'}}
\newcommand{\vonolo}{\von{\ol{\Omega}}}
\newcommand{\voncap}[2]{(#1\cap#2)}
\newcommand{\vonhOcapV}{(h(\Omega\cap V))}
\newcommand{\vontildeo}{\von{\tilde{\Omega}}}
\newcommand{\vonoltildeo}{\von{\ol{\tilde{\Omega}}}}

\newcommand{\mall}{\mbox{ all }}
\newcommand{\mfor}{\mbox{ for }}
\newcommand{\mforall}{\mbox{ for all }}

\newcommand{\rzz}{\mathbb{R} {}^2}
\newcommand{\rzd}{\mathbb{R} {}^3}
\newcommand{\rznm}{\mathbb{R} {}^{N-1} }
\newcommand{\rzn}{\mathbb{R} {}^{N}}
\newcommand{\rznp}{\mathbb{R} {}^{N+1}}
\newcommand{\rzpn}{\overset{\bullet}\mathbb{R} {}^{N}}
\newcommand{\czz}{\mathbb{C} {}^2}
\newcommand{\czd}{\mathbb{C} {}^3}
\newcommand{\cznm}{\mathbb{C} {}^{N-1} }
\newcommand{\czn}{\mathbb{C} {}^{N}}
\newcommand{\cznp}{\mathbb{C} {}^{N+1}}


\newcommand{\xr}[3]{{}_{#2}\overset{#3}{\mathbf{R}}{}^{#1}}
\newcommand{\xrq}{\xr{q}{}{}}
\newcommand{\xrqp}{\xr{q+1}{}{}}
\newcommand{\xrqppx}{\xr{q+2}{}{}}
\newcommand{\xrqm}{\xr{q-1}{}{}}
\newcommand{\xrqmm}{\xr{q-2}{}{}}
\newcommand{\xrnq}{\xr{q}{}{\circ}}
\newcommand{\xrnqp}{\xr{q+1}{}{\circ}}
\newcommand{\xrnqpp}{\xr{q+2}{}{\circ}}
\newcommand{\xrnqm}{\xr{q-1}{}{\circ}}
\newcommand{\xrnqmm}{\xr{q-2}{}{\circ}}
\newcommand{\xrqn}{\xr{q}{0}{}}
\newcommand{\xrqpn}{\xr{q+1}{0}{}}
\newcommand{\xrqppn}{\xr{q+2}{0}{}}
\newcommand{\xrqmn}{\xr{q-1}{0}{}}
\newcommand{\xrqmmn}{\xr{q-2}{0}{}}
\newcommand{\xrnqn}{\xr{q}{0}{\circ}}
\newcommand{\xrnqpn}{\xr{q+1}{0}{\circ}}
\newcommand{\xrnqppn}{\xr{q+2}{0}{\circ}}
\newcommand{\xrnqmn}{\xr{q-1}{0}{\circ}}
\newcommand{\xrnqmmn}{\xr{q-2}{0}{\circ}}


\newcommand{\xd}[3]{{}_{#2}\overset{#3}{\mathbf{D}}{}^{#1}}
\newcommand{\xdq}{\xd{q}{}{}}
\newcommand{\xdqp}{\xd{q+1}{}{}}
\newcommand{\xdqpp}{\xd{q+2}{}{}}
\newcommand{\xdqm}{\xd{q-1}{}{}}
\newcommand{\xdqmm}{\xd{q-2}{}{}}
\newcommand{\xdnq}{\xd{q}{}{\circ}}
\newcommand{\xdnqp}{\xd{q+1}{}{\circ}}
\newcommand{\xdnqpp}{\xd{q+2}{}{\circ}}
\newcommand{\xdnqm}{\xd{q-1}{}{\circ}}
\newcommand{\xdnqmm}{\xd{q-2}{}{\circ}}
\newcommand{\xdqn}{\xd{q}{0}{}}
\newcommand{\xdqpn}{\xd{q+1}{0}{}}
\newcommand{\xdqppn}{\xd{q+2}{0}{}}
\newcommand{\xdqmn}{\xd{q-1}{0}{}}
\newcommand{\xdqmmn}{\xd{q-2}{0}{}}
\newcommand{\xdnqn}{\xd{q}{0}{\circ}}
\newcommand{\xdnqpn}{\xd{q+1}{0}{\circ}}
\newcommand{\xdnqppn}{\xd{q+2}{0}{\circ}}
\newcommand{\xdnqmn}{\xd{q-1}{0}{\circ}}
\newcommand{\xdnqmmn}{\xd{q-2}{0}{\circ}}
\newcommand{\xsc}[2]{\overset{#2}{\mathrm{C}}{}^{#1}}
\newcommand{\xsci}{\xsc{\infty}{}}
\newcommand{\xscin}{\xsc{\infty}{\circ}}

\newcommand{\xc}[3]{\overset{#3}{\mathrm{C}}{}^{#1 , #2}}
\newcommand{\xciq}{\xc{\infty}{q}{}}
\newcommand{\xciqp}{\xc{\infty}{q+1}{}}
\newcommand{\xciqpp}{\xc{\infty}{q+2}{}}
\newcommand{\xciqm}{\xc{\infty}{q-1}{}}
\newcommand{\xciqmm}{\xc{\infty}{q-2}{}}
\newcommand{\xcinq}{\xc{\infty}{q}{\circ}}
\newcommand{\xcinqp}{\xc{\infty}{q+1}{\circ}}
\newcommand{\xcinqpp}{\xc{\infty}{q+2}{\circ}}
\newcommand{\xcinqm}{\xc{\infty}{q-1}{\circ}}
\newcommand{\xcinqmm}{\xc{\infty}{q-2}{\circ}}

\newcommand{\xcmq}{\xc{m}{q}{}}
\newcommand{\xcmqp}{\xc{m}{q+1}{}}
\newcommand{\xcmqpp}{\xc{m}{q+2}{}}
\newcommand{\xcmqm}{\xc{m}{q-1}{}}
\newcommand{\xcmqmm}{\xc{m}{q-2}{}}
\newcommand{\xcmnq}{\xc{m}{q}{\circ}}
\newcommand{\xcmnqp}{\xc{m}{q+1}{\circ}}
\newcommand{\xcmnqpp}{\xc{m}{q+2}{\circ}}
\newcommand{\xcmnqm}{\xc{m}{q-1}{\circ}}
\newcommand{\xcmnqmm}{\xc{m}{q-2}{\circ}}

\newcommand{\xcmpq}{\xc{m+1}{q}{}}
\newcommand{\xcmpqp}{\xc{m+1}{q+1}{}}
\newcommand{\xcmpqpp}{\xc{m+1}{q+2}{}}
\newcommand{\xcmpqm}{\xc{m+1}{q-1}{}}
\newcommand{\xcmpqmm}{\xc{m+1}{q-2}{}}
\newcommand{\xcmpnq}{\xc{m+1}{q}{0}}
\newcommand{\xcmpnqp}{\xc{m+1}{q+1}{\circ}}
\newcommand{\xcmpnqpp}{\xc{m+1}{q+2}{\circ}}
\newcommand{\xcmpnqm}{\xc{m+1}{q-1}{\circ}}
\newcommand{\xcmpnqmm}{\xc{m+1}{q-2}{\circ}}

\newcommand{\xcmmq}{\xc{m-1}{q}{}}
\newcommand{\xcmmqp}{\xc{m-1}{q+1}{}}
\newcommand{\xcmmqpp}{\xc{m-1}{q+2}{}}
\newcommand{\xcmmqm}{\xc{m-1}{q-1}{}}
\newcommand{\xcmmqmm}{\xc{m-1}{q-2}{}}
\newcommand{\xcmmnq}{\xc{m-1}{q}{\circ}}
\newcommand{\xcmmnqp}{\xc{m-1}{q+1}{\circ}}
\newcommand{\xcmmnqpp}{\xc{m-1}{q+2}{\circ}}
\newcommand{\xcmmnqm}{\xc{m-1}{q-1}{\circ}}
\newcommand{\xcmmnqmm}{\xc{m-1}{q-2}{\circ}}

\newcommand{\xsh}[2]{{\overset{#2}{\mathbf{H}}}{}^{#1}}
\newcommand{\xh}[3]{{\overset{#3}{\mathbf{H}}}{}^{#1,#2}}
\newcommand{\xhmq}{\xh{m}{q}{}}
\newcommand{\xhmqp}{\xh{m}{q+1}{}}
\newcommand{\xhmqpp}{\xh{m}{q+2}{}}
\newcommand{\xhmqm}{\xh{m}{q-1}{}}
\newcommand{\xhmqmm}{\xh{m}{q-2}{}}
\newcommand{\xhmnq}{\xh{m}{q}{\circ}}
\newcommand{\xhmnqp}{\xh{m}{q+1}{\circ}}
\newcommand{\xhmnqpp}{\xh{m}{q+2}{\circ}}
\newcommand{\xhmnqm}{\xh{m}{q-1}{\circ}}
\newcommand{\xhmnqmm}{\xh{m}{q-2}{\circ}}

\newcommand{\xhmpq}{\xh{m+1}{q}{}}
\newcommand{\xhmpqp}{\xh{m+1}{q+1}{}}
\newcommand{\xhmpqpp}{\xh{m+1}{q+2}{}}
\newcommand{\xhmpqm}{\xh{m+1}{q-1}{}}
\newcommand{\xhmpqmm}{\xh{m+1}{q-2}{}}
\newcommand{\xhmpnq}{\xh{m+1}{q}{\circ}}
\newcommand{\xhmpnqp}{\xh{m+1}{q+1}{\circ}}
\newcommand{\xhmpnqpp}{\xh{m+1}{q+2}{\circ}}
\newcommand{\xhmpnqm}{\xh{m+1}{q-1}{\circ}}
\newcommand{\xhmpnqmm}{\xh{m+1}{q-2}{\circ}}

\newcommand{\xhmmq}{\xh{m-1}{q}{}}
\newcommand{\xhmmqp}{\xh{m-1}{q+1}{}}
\newcommand{\xhmmqpp}{\xh{m-1}{q+2}{}}
\newcommand{\xhmmqm}{\xh{m-1}{q-1}{}}
\newcommand{\xhmmqmm}{\xh{m-1}{q-2}{}}
\newcommand{\xhmmnq}{\xh{m-1}{q}{\circ}}
\newcommand{\xhmmnqp}{\xh{m-1}{q+1}{\circ}}
\newcommand{\xhmmnqpp}{\xh{m-1}{q+2}{\circ}}
\newcommand{\xhmmnqm}{\xh{m-1}{q-1}{\circ}}
\newcommand{\xhmmnqmm}{\xh{m-1}{q-2}{\circ}}
\newcommand{\xslp}{{\mathrm{L}}{}^{p}}
\newcommand{\xsl}{{\mathrm{L}}{}^{2}}
\newcommand{\xl}[1]{{\mathrm{L}}{}^{2,#1}}
\newcommand{\xlq}{\xl{q}}
\newcommand{\xlqp}{\xl{q+1}}
\newcommand{\xlqpp}{\xl{q+2}}
\newcommand{\xlqm}{\xl{q-1}}
\newcommand{\xlqmm}{\xl{q-2}}
\newcommand{\xxl}[2]{{}_{#2}{\mathrm{L}}{}^{2,#1}}
\newcommand{\xxlqe}{\xxl{q}{\eps}}
\newcommand{\xxlqm}{\xxl{q}{\mu}}
\newcommand{\xxlqpe}{\xxl{q+1}{\eps}}
\newcommand{\xxlqpm}{\xxl{q+1}{\mu}}

\newcommand{\xa}[1]{\mathrm{A}{}^{#1}}
\newcommand{\xap}{\xa{p}}
\newcommand{\xaq}{\xa{q}}
\newcommand{\xaqp}{\xa{q+1}}
\newcommand{\xaqpp}{\xa{q+2}}
\newcommand{\xaqm}{\xa{q-1}}
\newcommand{\xaqmm}{\xa{q-2}}


\newcommand{\xxr}[1]{\mathcal{R}{}^{#1}}
\newcommand{\xxrq}{\xxr{q}}
\newcommand{\xxd}[1]{\mathcal{D}{}^{#1}}
\newcommand{\xxdq}{\xxd{q}}
\newcommand{\xxdqm}{\xxd{q-1}}

\newcommand{\xxh}[2]{\mathbf{H}{}^{#1,#2}}
\newcommand{\xxhmq}{\xxh{-m}{q}}
\newcommand{\xxhmqp}{\xxh{-m}{q+1}{-m}}
\newcommand{\xxhmqm}{\xxh{-m}{q-1}{-m}}
\newcommand{\xxhmpq}{\xxh{-(m+1)}{q}}
\newcommand{\xxhmpqp}{\xxh{-(m+1)}{q+1}}
\newcommand{\xxhmpqm}{\xxh{-(m+1)}{q-1}}
\newcommand{\xxhmmq}{\xxh{-(m-1)}{q}}
\newcommand{\xxhmmqp}{\xxh{-(m-1)}{q+1}}
\newcommand{\xxhmmqm}{\xxh{-(m-1)}{q-1}}
\newcommand{\xxheq}{\xxh{-1/2}{q}}
\newcommand{\xxheqp}{\xxh{-1/2}{q+1}}
\newcommand{\xxheqm}{\xxh{-1/2}{q-1}}
\newcommand{\xxhdq}{\xxh{-3/2}{q}}
\newcommand{\xxhdqp}{\xxh{-3/2}{q+1}}
\newcommand{\xxhdqm}{\xxh{-3/2}{q-1}}

\newcommand{\xY}{\mathrm{Y}}
\newcommand{\xrot}{\mathrm{rot}}
\newcommand{\xdiv}{\mathrm{div}}

\def \Ra{\Rightarrow}
\def \eh{\frac{1}{2}}
\def \ed{\frac{1}{3}}
\def \xmin{\mbox{ in }}
\def \malle{\mbox{ all }}
\def \mfur{\mbox{ for }}
\def \mmit{\mbox{ where }}
\def \qed{\bewboxs}
\def \id{{\rm id}}
\def \d{{\rm d}}
\def \dx{{\rm dx}}
\def \dh{{\rm dh}}
\def \dg{{\rm dg}}
\def \dy{{\rm dy}}
\def \dvp{{\rm d\vp}}
\def \dr{{\rm dr}}
\def \xgt{\gamma_t}
\def \xgn{\gamma_n}
\def \gt{\Gamma_t}
\def \gn{\Gamma_n}
\def \xgtb{\gamma_t^\mathrm{b}}
\def \xgnb{\gamma_n^\mathrm{b}}
\def \gtb{\Gamma_t^\mathrm{b}}
\def \gnb{\Gamma_n^\mathrm{b}}
\def \ggt{\Gamma_T^{\Gamma_1}}
\def \olggt{\ol{\Gamma_T^{\Gamma_1}}}
\def \chggt{\check{\Gamma_T}^{\Gamma_1}}
\def \xchgt{\check{\gamma}_t}
\def \xchgn{\check{\gamma}_n}
\def \chgt{\check{\Gamma}_t}
\def \chgn{\check{\Gamma}_n}
\def \xchgtb{\check{\gamma}_t^\mathrm{b}}
\def \xchgnb{\check{\gamma}_n^\mathrm{b}}
\def \chgtb{\check{\Gamma}_t^\mathrm{b}}
\def \chgnb{\check{\Gamma}_n^\mathrm{b}}
\def \normu#1#2{|\normabst| #1 |\normabst|_{ #2 }}
\def \spu#1#2#3{\langle #1 , #2 \rangle_{ #3 }}
\def \sqn#1{\sum_{I\in{\cal S}(q,N-1)} #1 \dx^I}
\def \sqmn#1{\sum_{I\in{\cal S}(q-1,N-1)} #1 \dx^I}
\def \ssqn#1{\sum_{I\in{\cal S}(q,N-1)} #1 }
\def \ssqmn#1{\sum_{I\in{\cal S}(q-1,N-1)} #1 }
\def \sxqn#1{\sum_{I\in{\cal S}(q,N)} #1 \dx^{I}}
\def \syqn#1{\sum_{I\in{\cal S}(q,N)} #1 \dy^{I}}
\def \sqqn{\sum_{I\in{\cal S}(q,N)}}
\def \sqqnm{\sum_{I\in{\cal S}(q,N-1)}}
\def \alle#1{\bigwedge\limits_{#1}}
\def \gibt#1{\bigvee\limits_{#1}}

\def \xfat#1{\bf{ #1 }}
\def \xfatH{\xfat{H}}
\def \xfatI{\xfat{I}}
\def \xfatD{\xfat{D}}
\def \xfatB{\xfat{B}}
\def \xfatE{\xfat{E}}
\def \xfatrho{\mbox{\boldmath$\rho$}}
\def \xxi{\rm {i} }

\newcommand{\Eh}{\check{E}}
\newcommand{\pkdiv}{\mathrm{div}}
\renewcommand{\rot}{\pd}
\renewcommand{\xrot}{\rot}
\renewcommand{\pdiv}{\delta}
\renewcommand{\xdiv}{\pdiv}
\renewcommand{\Rot}{\rot_{\,\,\dom}}
\renewcommand{\Div}{\pdiv_{\,\,\dom}}
\renewcommand{\pr}[4]{{}_{#3}\overset{#4}{\mathrm{D}}{}^{#1}_{#2}}
\renewcommand{\pR}[4]{{}_{#3}\overset{#4}{\mathbf{D}}{}^{#1}_{#2}}
\renewcommand{\pdi}[4]{{}_{#3}\overset{#4}{\Delta}{}^{#1}_{#2}}
\renewcommand{\pDi}[4]{{}_{#3}\overset{#4}{\mbox{\boldmath$\Delta$}}{}^{#1}_{#2}}
\renewcommand{\xr}[3]{\pr{#1}{}{#2}{#3}}
\renewcommand{\xd}[3]{\pdi{#1}{}{#2}{#3}}
\renewcommand{\xhmq}{\qh{m}{q}{}{}}
\renewcommand{\xhmqp}{\qh{m}{q+1}{}{}}
\renewcommand{\xhmqm}{\qh{m}{q-1}{}{}}
\renewcommand{\xhmpq}{\qh{m+1}{q}{}{}}
\renewcommand{\xhmpqp}{\qh{m+1}{q+1}{}{}}
\renewcommand{\xhmpqm}{\qh{m+1}{q-1}{}{}}
\renewcommand{\xhmmq}{\qh{m-1}{q}{}{}}
\renewcommand{\xhmmqp}{\qh{m-1}{q+1}{}{}}
\renewcommand{\xhmmqm}{\qh{m-1}{q-1}{}{}}

\begin{document}

\date{2008}
\maketitle{}

\begin{abstract}
We prove regularity results up to the boundary for time independent generalized Maxwell equations on Riemannian manifolds
with boundary using the calculus of alternating differential forms. We discuss homogeneous and inhomogeneous
boundary data and show `polynomially weighted' regularity in exterior domains as well.\\
\keywords{regularity, Maxwell's equations, electro-magnetic problems}\\
\amsclass{35Q60, 78A25, 78A30}
\end{abstract}

\tableofcontents

\section{Introduction}

Regularity theorems are important tools in almost all fields of partial differential equations.
In our efforts to completely determine the low frequency behavior 
of the time-harmonic solutions of the generalized Maxwell's equations in exterior domains of $\rN$
\cite{paulydiss,paulytimeharm,paulystatic,paulydeco,paulyasym}
as well as to prove compactness results and trace theorems for Sobolev spaces of differential forms
on $N$-dimensional Riemannian manifolds \cite{kuhndiss}
we have been forced to show regularity results, which meet our needs.
Here `generalized' means using the calculus 
of alternating differential forms on Riemannian manifolds of arbitrary dimension, 
which is a convenient and well-known way to formulate Maxwell's equations and to emphasize their 
independence of the special choice of a coordinate system.
Since these results are of particular interest of their own 
we will prove in the paper at hand results for the time independent case like the following:

{\sf Let $M$ be a $N$-dimensional smooth Riemannian manifold and $\om\subset M$ be some connected open subset.
If the exterior derivative of some differential form $E$ from $\Lzsom$ and the co-derivative of $\eps E$ belong to some
suitable weighted Sobolev space $\hom{m}{s+1}$ 
and the tangential trace $\iota^*E$ belongs to the corresponding 
trace Sobolev space $\h{m+1/2}{}{}(\dom)$ as well, 
then $E$ already belongs to the higher order Sobolev space $\hom{m+1}{s}$\,.}
(For details please see section \ref{results}.)

Here $\eps$ is a real valued, symmetric, bounded and uniformly positive definite linear transformation 
(one may think of a matrix) on differential forms, $\iota$ denotes the natural embedding of the boundary, i.e.
$\iota:\dom\hookrightarrow\omq$\,, and $s\in\rz$ indicates some polynomially weight.
For manifolds with compact closure, i.e. `bounded domains', the weight $s$ plays no role 
since then all results for $s$ are equivalent to the special case $s=0$\,. 

Regularity results as well as regularity estimates, which automatically will be shown within our proofs,
presented here are flexibly usable in the context of time independent generalized Maxwell's equations.
For example, if we consider (linear media and) the static generalized Maxwell equations
\begin{align*}
\rot E&=G&&,&\pdiv\eps E&=f&&,&\iota^*E&=\lambda&&,\\
\pdiv H&=F&&,&\rot\mu H&=g&&,&\iota^*\mu H&=\kappa
\end{align*}
or the time-harmonic generalized Maxwell equations (with frequency $\omega$)
\begin{align*}
\rot E+\ie\omega\mu H&=G&&,&\iota^*E&=\lambda&&,\\
\pdiv H+\ie\omega\eps E&=F&&,&\iota^*\mu H&=\kappa&&,
\end{align*}
e.g. arising from the full generalized Maxwell equations by Fourier's transformation with respect to time 
(or a time-harmonic ansatz), 
we get regularity of the solutions and corresponding estimates immediately or by induction, respectively.     

We should mention that the generalized Maxwell equations also comprise the system of linear acoustics 
and the 2-dimensional version of Maxwell's equations as well as periodic boundary conditions
in a unified approach. 

In the special classical case of bounded sub-domains of the Euclidian space $\rd$ and homogeneous boundary traces
such results for Maxwell problems have been proved earlier by Weber \cite{weberreg}.

\section{Preliminaries and definitions}

Let $M$ be a $N$-dimensional smooth Riemannian manifold
and $\om\subset M$ denote some connected open subset with compact closure in $M$\,.
On $\cqunom$\,, the vector space of all smooth ($\cu$) differential forms of rank $q$
(shortly $q$-forms) on $\om$ with compact support in $\om$\,, we have a scalar product
$$\skp{E}{H}_{\lzqom}:=\intom E\wedge*\bar{H}$$
and thus we may define $\lzqom$\,, the Hilbert space of all square integrable $q$-forms,
as the closure of $\cqunom$ in the corresponding induced norm. Utilizing the weak version of Stokes' theorem
$$\skp{\pd E}{H}_{\lzqpeom}=-\skp{E}{\delta H}_{\lzqom}\qquad\forall\;\EH\in\cn{\infty,q,q+1}(\om)$$
(with an obvious notation)
we can define weak versions of the exterior derivative and the co-derivative.
Hence we can introduce the Hilbert spaces (equipped with their natural graph norms)
\begin{align*}
\rqom{}&:=\setb{E\in\lzqom}{\pd E\in\lzqpeom}\qquad,\\
\dqpeom{}&:=\setb{H\in\lzqpeom}{\pdiv H\in\lzqom}
\intertext{and their closed subspaces}
\rqnom{}&:=\setb{E\in\lzqom}{\pd E=0}\qquad,\\
\dqpenom{}&:=\setb{H\in\lzqpeom}{\pdiv H=0}\qquad.
\end{align*}
Using charts we may define the usual Sobolev spaces $\hqm(\om)$ of real order $m\geq0$\,.
For this we need a finite chart family $(V_\ell.h_\ell)$\,, $\ell=1,\dots,L$\,, covering the compact set $\omq$\,.
Then we write $E\in\hqm(\om)$\,, if and only if $E_I^\ell\in\hm\big(h_\ell(V_\ell\cap\om)\big)$
for all $I$ and
$$\norm{E}_{\hqm(\om)}:=\big(\sum_{\ell=1}^{L}\sum_I\norm{E_I^\ell}_{\hm(h_\ell(V_\ell\cap\om))}^2\big)^{1/2}<\infty\qquad,$$
where $E_I^\ell$ denote the component functions
of $(h_\ell^\me)^*E=E_I^\ell\pd x^I$ (sum convention) with respect to Cartesian coordinates.
Here we introduced an obvious (ordered) multi index notation $\d x^I=\d x^{i_1}\wedge\dots\wedge\d x^{i_q}$
for $I:=(i_1,\dots,i_q)\in\{1,\dots,N\}^q$\,.
Transformation theorems and \cite[Satz 4.1]{wloka} for scalar functions
show that this definition is independent of the chosen charts.
Another covering yields the same Sobolev space but with an equivalent norm.
Furthermore, for all $m\in\nzn$ and any $\pc{m+1}$-diffeomorphism $\tau:\tilde{\om}\to\xo$ there exists a constant $c>0$\,, such that
\beq c^\me\normu{E}{\hqm\vono}\le\normu{\tau^*E}{\hqm(\tilde{\om})}\le c\normu{E}{\hqm\vono}\mylabel{pullnorm}\eeq
holds for all $E\in\hqm\vono$\,.

\begin{defini}\mylabel{boundarydefi}
Let $m\in\nzn$\,. We call $\dom$ a `$\pc{m}$-{\sf boundary}', if $\dom$ is a $(N-1)$-dimensional $\pc{m}$-submanifold of $M$\,,
i.e. for each $x\in\dom$ there exists a $\pc{m}$-boundary chart
$(V,h)$ with $h(x)=0$ and $h(\xolv)=\ol{U}_1$\,, such that
$$h(\dom\cap V)=U_1^0\qtext{,}h(\om\cap V)=U_1^-\qtext{,}h\big((M\ohne\ol{\om})\cap V\big)=U_1^+$$
and $h\circ k^\me\in\pc{m}\big(k(\tilde{V}\cap V),\rN\big)$ hold for all charts (for $\om$) $(\tilde{V},k)$ of $x\in\dom$\,.
\end{defini}

Here $U_r\subset\rN$ denotes the open ball centered at the origin with radius $r>0$ and we define
$$U_r^\pm:=\setb{x\in U_r}{\pm x_N>0}\qqtext{,}U_r^0:=\setb{x\in U_r}{x_N=0}\qquad.$$

Using sufficiently smooth restricted boundary charts and following the ideas of the definition of $\hqm(\om)$
we may also introduce for all $m\in[0,\infty)$ the Sobolev spaces $\hqm(\dom)$\,.

We also define $\qh{-m}{q}{}{}(\dom)$ for $m\in(0,\infty)$ as the dual space of
$\qh{m}{q}{}{\circ}(\dom)=\qh{m}{q}{}{}(\dom)$ and introduce the exterior derivative, co-derivative
and star-operator on $\qh{-m}{q}{}{}(\dom)$ by weak formulations.
Utilizing boundary charts, \eqref{pullnorm} and the corresponding results for scalar Sobolev spaces,
e.g. \cite[Satz 8.7, Satz 8.8]{wloka}, which will be applied componentwise to $q$-forms in $\rN$\,,
we obtain the following lemma:

\begin{lem}\mylabel{hspur}
Let $m\in\nz$ and $\Omega$ possess  a $\pc{m+1}$-boundary. Moreover, let
$\iota:\dom\hookrightarrow\ol{\om}$ denote the natural embedding.
Then there exists a linear and continuous tangential trace operator
$$\xgt:\hqm\vono\to\qh{m-1/2}{q}{}{}\vonpo$$
satisfying $\xgt\Phi=\iota^*\Phi$ and $\rot_{\,\,\dom}\xgt\Phi=\xgt\rot\Phi$ for all $\Phi\in\xciq\vonolo$\,,
the vector space of all $\xciq(M)$-forms restricted to $\om$\,.
Moreover, $\xgt$ is surjective, i.e. there exists a linear and continuous tangential extension operator
$$\xchgt:\qh{m-1/2}{q}{}{}\vonpo\to\hqm\vono$$
with the property $\xgt\xchgt=\id$ (right inverse).
\end{lem}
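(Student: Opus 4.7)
The plan is to localize by a $\pc{m+1}$-boundary atlas and reduce to the scalar trace and extension theorems \cite[Satz 8.7, Satz 8.8]{wloka} applied componentwise. First I fix a finite family of boundary charts $(V_\ell,h_\ell)_{\ell=1}^{L}$ covering $\dom$, complete it by interior charts to an atlas of $\omq$, and pick a subordinate smooth partition of unity $\{\varphi_\ell\}$. Any $\Phi\in\hqm(\om)$ is split as $\sum_\ell\varphi_\ell\Phi$; each piece is transported to the half-ball $U_1^-$ via $(h_\ell^{\me})^*$ and written there as $\sum_I\Phi_I^\ell\,\d x^I$ with $\Phi_I^\ell\in H^m(U_1^-)$. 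The action of $\iota^*$ in these coordinates is standard: components with $N\in I$ vanish under the pull-back, while the remaining scalar components $\Phi_I^\ell$ with $N\notin I$ are to be restricted to $U_1^0=\{x_N=0\}$.

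For each such component the scalar trace theorem supplies a continuous linear restriction $H^m(U_1^-)\to H^{m-1/2}(U_1^0)$ coinciding with point evaluation on smooth functions. Applying it componentwise, pulling back to $\dom\cap V_\ell$ via $h_\ell$, and summing over $\ell$ defines the desired continuous tangential trace $\xgt:\hqm(\om)\to\qh{m-1/2}{q}{}{}(\dom)$; by construction $\xgt\Phi=\iota^*\Phi$ whenever $\Phi$ is smooth on $\omq$. The commutation $\Rot\xgt\Phi=\xgt\rot\Phi$ on smooth $\Phi$ is then exactly the naturality $\Rot\iota^*=\iota^*\rot$ of the exterior derivative with respect to the embedding $\iota$, an algebraic, coordinate-independent fact.

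For the right inverse I reverse the procedure: given $\lambda\in\qh{m-1/2}{q}{}{}(\dom)$, I use a partition of unity on $\dom$, transport each localized piece to $U_1^0$, and apply the scalar extension operator $H^{m-1/2}(U_1^0)\to H^m(U_1^-)$ componentwise, padding by zero the components indexed by multi-indices containing $N$. After a cut-off near $\partial U_1$, pull-back by $h_\ell$, and summation over $\ell$ I obtain $\xchgt\lambda\in\hqm(\om)$, and $\xgt\xchgt=\id$ is inherited from the right-inverse property of the scalar extension.

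The main technical obstacle I anticipate is verifying that $\xgt$ and $\xchgt$ are intrinsically defined, i.e.\ independent of the chart family and of the partition of unity. This rests on two ingredients: every $\pc{m+1}$-boundary chart sends $\dom\cap V$ into $\{x_N=0\}$, so the tangential/normal multi-index splitting is preserved under chart transitions on $\dom$; and a pull-back estimate of the type \eqref{pullnorm} at order $m-1/2$ for $\qh{m-1/2}{q}{}{}(\dom)$, which itself reduces to scalar half-integer trace theory and explains why $\pc{m+1}$, not merely $\pc{m}$, regularity of $\dom$ is imposed.
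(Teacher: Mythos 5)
Your proposal is correct and follows exactly the route the paper indicates for this lemma: localization by $\pc{m+1}$-boundary charts, the pull-back estimate \eqref{pullnorm}, and componentwise application of the scalar trace and extension theorems \cite[Satz 8.7, Satz 8.8]{wloka}, with the commutation $\Rot\xgt=\xgt\rot$ on smooth forms reduced to the naturality $\iota^*\d=\d\,\iota^*$. The paper gives no further detail, so your write-up is a faithful elaboration of its intended argument.
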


By the star operator we define linear and continuous normal trace and extension operators by
$$\xgn:=(-1)^{(q-1)N}*_{\dom}{\xgt}*:\hqm\vono\To\qh{m-1/2}{q-1}{}{}\vonpo\qquad,$$
$$\xchgn:=(-1)^{q(N-q)}*\xchgt*_{\dom}:\qh{m-1/2}{q-1}{}{}\vonpo\To\hqm\vono\qquad,$$
which possess the corresponding properties. By Stokes' theorem we obtain
\beq\spu{\xrot E}{H}{\xlqp\vono}+\spu{E}{\xdiv H}{\xlq\vono}
=\spu{\xgt E}{\xgn H}{\xlq\vonpo}\mylabel{TN}\eeq
for $\EH\in\qh{1}{q,q+1}{}{}\vono$\,, if $\om$ has a $\pc{2}$-boundary.

It is well known that this suggests to define the tangential trace
$$\xgt E\in\qh{-1/2}{q}{}{}\vonpo$$
of a $q$-form $E\in\xrq\vono$ by
\beq\xgt E(\vp)=\spu{\xgt E}{\vp}{\qh{-1/2}{q}{}{}\vonpo}:=\spu{\xrot E}{\xchgn\vp}{\xlqp\vono}
+\spu{E}{\xdiv\xchgn\vp}{\xlq\vono}\mylabel{tantracedef}\eeq
for all $\vp\in\qh{1/2}{q}{}{}\vonpo$\,. Clearly acting on $E\in\qh{1}{q}{}{}\vono$ it satisfies
\beq\spu{\xgt E}{\vp}{\qh{-1/2}{q}{}{}\vonpo}=\spu{{\xgt} E}{\vp}{\xlq\vonpo}\mylabel{traceonhone}\eeq
for all $\vp\in\qh{1/2}{q}{}{}\vonpo$\,. Hence in this case we have
$\xgt E=\spu{{\xgt} E}{\,\cdot\,}{\xlq\vonpo}$ and we identify the continuous linear functional $\xgt E$
with the element $\xgt E\in\qh{1/2}{q}{}{}\vonpo$\,. We note that $\xgt$ still commutes with the
exterior derivative and that the mapping
$$\xgt:\xrq\vono\To\xxdq\vonpo:=\setb{e\in\qh{-1/2}{q}{}{}\vonpo}{\Rot e\in\qh{-1/2}{q+1}{}{}\vonpo}$$
is continuous. Moreover, we have for all $E\in\xrq\vono$
\beq\xgt E=0\qquad\Equi\qquad E\in\xrnq\vono\qquad,\mylabel{spurnullr}\eeq
where we set
$$\xrnq\vono:=\ol{\cqun(\om)}\qquad,$$
taking the closure in $\rqom{}$\,. We note
$$\xrnq\vono=\setb{E\in\rqom{}}{\forall\;H\in\dqpeom{}\quad\skp{\rot E}{H}_{\lzqpeom}+\skp{E}{\pdiv H}_{\lzqom}=0}$$
and define $\ronqnom{}:=\ronqom{}\cap\rqnom{}$\,.

\begin{defini}\mylabel{transdefi}
Let $m\in\nzn$\,. We call a transformation $\eps$ {\sf admissible}, if and only if
\begin{itemize}
\item $\eps(x)$ is a linear mapping on $q$-forms for all $x\in\om$\,,
\item $\eps$ possesses real $\text{\rm L}^\infty(\Omega)$-coefficients, i.e. the matrix representation of
$\eps$ corresponding to an arbitrary chart basis $\{\pd h^I\}$ has $\text{\rm L}^\infty(\Omega,\rz)$-entries,
\item $\eps$ is symmetric, i.e. for all $E,H\in\lzqom$ we have
$$\skp{\eps E}{H}_{\lzqom}=\skp{E}{\eps H}_{\lzqom}\qquad,$$
\item $\eps$ is uniformly positive definite, i.e.
$$\exists\;c>0\quad\forall\;E\in\lzqom\qquad\skp{\eps E}{E}_{\lzqom}\geq c\norm{E}_{\lzqom}^2\qquad.$$
\end{itemize}
We call $\eps$ $\pc{m}$-{\sf admissible},
if and only if $\eps$ is admissible and has $\com{m}$-coefficients,
which are bounded together with all their derivatives up to the boundary.
Here we mean componentwise differentiation and write $\pa\eps$ for $|\alpha|\leq m$\,.
\end{defini}

We note that admissible transformations $\eps$ generate an equivalent scalar product on $\lzqom$ by
$$\EH\longmapsto\skp{\eps E}{H}_{\lzqom}\qquad.$$

Of course most of these concepts extend to manifolds, whose closures are not compact.
Particularly we may consider the special case of $M:=\rN$ as a smooth Riemannian manifold
of dimension $N\in\nz$ and an exterior domain $\Omega\subset\rN$\,, i.e. $\om$ is connected and $\rN\ohne\Omega$ compact.
The definitions of spaces carry over to exterior domains as long as the compactness of $\omq$
is not necessary.

Using the weight function
$$\rho:=(1+r^2)^{1/2}\qqtext{,}r(x):=|x|$$
we introduce for $m\in\nzn$ and $s\in\rz$ the scalar weighted Sobolev spaces
\begin{align*}
\hmsom&:=\setb{u\in\Lzlocom}{\rho^{s+|\alpha|}\p^\alpha u\in\lzom\text{ for all }|\alpha|\leq m}\qquad,\\
\subset\Hmsom&:=\setb{u\in\Lzlocom}{\rho^s\p^\alpha u\in\lzom\text{ for all }|\alpha|\leq m}
\end{align*}
utilizing the usual multi index notation for partial derivatives. 
(To distinguish between these different polynomially weighted Sobolev spaces of exterior domains
we will use roman and bold roman letters simultaneously.)
Equipped with their natural scalar products these are Hilbert spaces.

Now we have a global chart $(\Omega,\id)$ and $\Omega$ becomes naturally a $N$-dimensional smooth Riemannian manifold with Cartesian
coordinates $\{x_1,\dots,x_N\}$\,. As before with componentwise partial derivatives $\p^\alpha u=(\p^\alpha u_I)\pd x^I$\,, if $u=u_I\pd x^I$\,,
we introduce for $m\in\nzn$ and $s\in\rz$ componentwise the Sobolev spaces $\hqmsom$ resp. $\Hqmsom$ of $q$-forms.
In the special case $m=0$ we define
$$\Lzqsom:=\qhom{0}{q}{s}{}=\qHom{0}{q}{s}{}\qquad.$$
Then for $f=f_I\pd x^I,g=g_I\pd x^I\in\Lzqsom$ we have the scalar product
$$\skp{f}{g}_{\Lzqsom}=\intom\rho^{2s}\ub{f\wedge*\ol{g}}_{=:*\skp{f}{g}_q}=\intom\rho^{2s}\skp{f}{g}_q\,d\lambda
=\intom\rho^{2s}f_I\ol{g}_I\,d\lambda\quad,$$
where $\lambda$ denotes Lebesgue's measure in $\rN$\,.

Furthermore, for $s\in\rz$ we need some special weighted Sobolev spaces suited for the exterior derivative and co-derivative:
\begin{align*}
\rqsom&:=\setb{E\in\Lzqsom}{\rot E\in\Lzqpeom{s+1}}\\
\subset\Rqsom&:=\setb{E\in\Lzqsom}{\rot E\in\Lzqpesom}\\
\dqsom&:=\setb{H\in\Lzqsom}{\pdiv H\in\qLzom{q-1}{s+1}}\\
\subset\Dqsom&:=\setb{H\in\Lzqsom}{\pdiv H\in\qLzom{q-1}{s}}
\end{align*}
Equipped with their natural graph norms these are all Hilbert spaces. To generalize the homogeneous tangential
boundary condition we introduce again $\ronqsom$ resp. $\Ronqsom$ as the closure of $\cqun(\Omega)$ with respect to
the corresponding graph norm $\norm{\,\cdot\,}_{\rqsom}$\,, $\norm{\,\cdot\,}_{\Rqsom}$\,, respectively.
The spaces $\Rqsom$\,, $\Dqsom$ and even $\Ronqsom$ are invariant under multiplication
with bounded smooth functions. As in the last section a subscript $0$ at the lower left corner
indicates vanishing exterior derivative resp. co-derivative, e.g.
$$\ronqsnom=\setb{E\in\ronqsom}{\rot E=0}=\Ronqsnom\qquad.$$

The properties `admissible' and `$\pc{m}$-admissible' extend analogously to our exterior domain case as well.
Nevertheless we need some additional decay properties of our transformations.

\begin{defini}\mylabel{transdefiweig}
Let $m\in\nzn$ and $\tau\geq0$\,. We call $\eps$ $\tau$-$\pc{m}$-{\sf admissible of first} resp. {\sf second kind}, if and only if
$\eps=\eps_{0}+\epsd$ with some $\eps_{0}>0$ is $\pc{m}$-admissible and the perturbation $\epsd$ satisfies
$$\forall\,|\alpha|\leq m\qquad\pa\epsd=\calO(r^{-\tau})\qtext{resp.}\calO(r^{-(\tau+|\alpha|)})\qqtext{as}r\to\infty\qquad.$$
In each case we call $\tau$ the {\sf order of decay} of the perturbation $\epsd$\,.
Without loss of generality we may assume $\eps_{0}=1$\,, i.e. $\eps=\id+\epsd$\,, throughout this paper.
\end{defini}

We note that a transformation is $0$-$\pc{m}$-admissible of first kind, if and only if it is $\pc{m}$-admissible.

Finally if the exterior domain $\om$ has got a $\pc{2}$-boundary
there exist adequate trace and extension operators as well. By obvious restriction, extension by zero
and cutting techniques we obtain linear and continuous tangential trace and extension operators
$$\bigcup_{s\in\rz}\Hqmsom\overset{\xgt}{\To}\qh{m-1/2}{q}{}{}(\dom)\overset{\xchgt}{\To}\bigcap_{s\in\rz}\hqmsom\qqtext{,}\xgt\xchgt=\id\qquad,$$
where $\xchgt$ even maps to compactly supported forms and $\xgt$ even operates on $\hqmlocom$\,.
Here continuity is to be understood in the sense of
\beq\Hqmsom\overset{\xgt}{\To}\qh{m-1/2}{q}{}{}(\dom)\overset{\xchgt}{\To}\hqmsom\mylabel{gthdefaussen}\eeq
for all $s\in\rz$\,. Again by the star operator we get the corresponding
linear and continuous normal trace and extension operators $\xgn:=\pm*_{\dom}{\xgt}*$\,,
$\xchgn:=\pm*\xchgt*_{\dom}$\,. As indicated above by Stokes' theorem \eqref{TN}
we then get for all $s\in\rz$ a linear and continuous tangential trace operator $\xgt:\Rqsom\To\qh{-1/2}{q}{}{}\vonpo$\,,
which is (well) defined by
$$\xgt E(\vp)=\spu{\xgt E}{\vp}{\qh{-1/2}{q}{}{}\vonpo}:=\spu{\xrot E}{\xchgn\vp}{\xlqp\vono}
+\spu{E}{\xdiv\xchgn\vp}{\xlq\vono}$$
for all $E\in\Rqsom$ and $\vp\in\qh{1/2}{q}{}{}\vonpo$\,.
Once more for $E\in\Hqmsom$ we identify the continuous linear functional $\xgt E$
with the element $\xgt E\in\qh{1/2}{q}{}{}\vonpo$ and of course the mapping
$$\xgt:\Rqsom\To\xxdq\vonpo$$
is continuous as well. We still have for all $s\in\rz$ and all $E\in\Rqsom$
\beq\xgt E=0\qquad\Equi\qquad E\in\Ronqsom\qquad.\mylabel{spurnullraussen}\eeq

\section{Regularity theorems}\mylabel{results}

\begin{theo}\mylabel{regularitybd}
Let $m\in\nzn$\,, $\om$ be a connected open subset with compact closure and $\pc{m+2}$-boundary
of some smooth Riemannian manifold $M$ as well as $\eps$ be some $\pc{m+1}$-admissible transformation. 
Furthermore, let
$$E\in\rqom{}\cap\eps^\me\dqom{}$$
with
$$\rot E\in\qhom{m}{q+1}{}{}\qtext{,}\pdiv\eps E\in\qhom{m}{q-1}{}{}\qtext{,}\xgt E\in\qh{m+1/2}{q}{}{}(\dom)\quad.$$
Then $E\in\qhom{m+1}{q}{}{}$ and there exists a positive constant $c$ independent of $E$\,, such that
\begin{align*}
&\qquad\norm{E}_{\qhom{m+1}{q}{}{}}\\
&\leq c\big(\norm{E}_{\Lzqom{}}+\norm{\rot E}_{\qhom{m}{q+1}{}{}}+\norm{\pdiv\eps E}_{\qhom{m}{q-1}{}{}}
+\norm{\xgt E}_{\qh{m+1/2}{q}{}{}(\dom)}\big)\quad.
\end{align*}
\end{theo}

\begin{theo}\mylabel{regularityunbd}
Let $s\in\rz$\,, $m\in\nzn$\,, $\Omega\subset\rN$ be an exterior domain with $\pc{m+2}$-boundary
and $\eps$ be some $\pc{m+1}$-admissible transformation. Furthermore, let
$$E\in\Rqsom\cap\eps^\me\Dqsom\qqtext{with}\xgt E\in\qh{m+1/2}{q}{}{}(\dom)\qquad.$$
\begin{itemize}
\item[\rm\bf (i)] Then $\rot E\in\qHom{m}{q+1}{s}{}$ and $\pdiv\eps E\in\qHom{m}{q-1}{s}{}$ imply $E\in\qHom{m+1}{q}{s}{}$ and with some constant $c>0$
\begin{align*}
&\qquad\norm{E}_{\qHom{m+1}{q}{s}{}}\\
&\leq c\big(\norm{E}_{\Lzqom{s}}+\norm{\rot E}_{\qHom{m}{q+1}{s}{}}
+\norm{\pdiv\eps E}_{\qHom{m}{q-1}{s}{}}+\norm{\xgt E}_{\qh{m+1/2}{q}{}{}(\dom)}\big)
\end{align*}
holds uniformly with respect to $E$\,.
\item[\rm\bf (ii)] If additionally $\eps$ is $0$-$\pc{m+1}$-admissible of second kind and $\tau$-$\pc{0}$-admissible
of first (or second) kind with some $\tau>0$
then $\rot E\in\qhom{m}{q+1}{s+1}{}$ and $\pdiv\eps E\in\qhom{m}{q-1}{s+1}{}$ imply $E\in\qhom{m+1}{q}{s}{}$
and there exists some positive constant $c$\,, such that the estimate
\begin{align*}
&\qquad\norm{E}_{\qhom{m+1}{q}{s}{}}\\
&\leq c\big(\norm{E}_{\Lzqom{s}}+\norm{\rot E}_{\qhom{m}{q+1}{s+1}{}}
+\norm{\pdiv\eps E}_{\qhom{m}{q-1}{s+1}{}}+\norm{\xgt E}_{\qh{m+1/2}{q}{}{}(\dom)}\big)
\end{align*}
holds uniformly with respect to $E$\,.
\end{itemize}
\end{theo}

\begin{rem}\mylabel{regularityrem}
Utilizing the transformation $E\leadsto\eps E$ and/or the Hodge star-operator we obtain similar
results for spaces like $\eps^\me\rqom{}\cap\dqom{}$ and/or with prescribed normal traces $\xgn$\,.
\end{rem}

\section{Proofs}

\subsection{Riemannian manifolds with compact closure}

\begin{proof}{\bf of Theorem \ref{regularitybd} } 
Extending the boundary form $\xgt E$ to $\om$ by Lemma \ref{hspur} via
$$\Eh:=\xchgt\xgt E\in\qhom{m+1}{q}{}{}$$
yields that $\Es:=E-\Eh$ is an element of $\ronqom{}\cap\eps^\me\dqom{}$ and still satisfies
$$\rot\Es\in\qhom{m}{q+1}{}{}\qqtext{,}\pdiv\eps\Es\in\qhom{m}{q-1}{}{}\qquad.$$
Hence our problem is reduced to the discussion of forms with homogeneous tangential trace.

The classical case $N=3$\,, $q=1$ and $\om$ is some bounded domain in $\rd$ has been proved by Weber
in \cite{weberreg} using the natural regularity of $(q-1=0)$- resp. $(q+2=3)$-forms,
i.e. scalar functions. Here in the generalized case we have to deal with some additional difficulties.

Using a partition of unity we localize our problem and only consider the more difficult case of boundary charts.
(A very simple proof of inner regularity utilizing Fourier's transformation is presented in section \ref{extdomsec}.)
By \eqref{pullnorm} and Lemma \ref{taudivrot} we transform our problem to the special domain $U_1^-\subset U_1\subset\rN$
using a $\xsc{m+2}{}$-boundary chart. Hence we have to show the following assertion for the model problem:

\begin{lem}\mylabel{hmqziel}
Let $\eps$ be $\pc{m+1}$-admissible (in $U_1^-$) and $E\in\xrnq(U_1^-)\cap\eps^{-1}\xdq(U_1^-)$
with $\supp E\subset \ol{U_\varrho^-}$ for some $\varrho\in(0,1)$  as well as
$$\xrot E\in\xhmqp(U_1^-)\qqtext{,}\xdiv\eps E\in\xhmqm(U_1^-)\qquad.$$
Then $E\in\xhmpq(U_1^-)$ and there exists a positive constant $c$\,, such that
$$\normu{E}{\xhmpq(U_1^-)}\le c\big(\normu{E}{\xlq(U_1^-)}+\normu{\xrot E}{\xhmqp(U_1^-)}
+\normu{\xdiv\eps E}{\xhmqm(U_1^-)}\big)$$
holds uniformly with respect to $E$\,.
\end{lem}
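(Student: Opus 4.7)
My plan is to prove Lemma \ref{hmqziel} by induction on $m$, combining two features specific to the flat half-ball: tangential directions admit difference quotients that preserve both the homogeneous tangential trace and the compact support, while the exterior derivative and co-derivative together algebraically recover the missing normal derivative of $E$ up to lower-order terms.

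\textbf{Base case $m=0$.} First I would reduce to the constant-coefficient model by writing $\eps=\id+\epsd$, so that $\pdiv E=\pdiv\eps E-\pdiv(\epsd E)\in\xlq(U_1^-)$ with norm controlled by $\|E\|_{\xlq(U_1^-)}$, $\|\pdiv\eps E\|$ and the $\pc{1}$-bound on $\epsd$. Next I extend $E$ across $U_1^0$: in the decomposition $E=E^\tau+\d x^N\wedge E^\nu$ into tangential and normal strands, the hypothesis $\xgt E=0$ says precisely that the coefficients of $E^\tau$ vanish on $\{x^N=0\}$, so reflecting $E^\tau$ oddly and $E^\nu$ evenly in $x^N$ produces an extension $\tilde E$ on $U_1$ with no singular interface contribution, with $\xrot\tilde E$ and $\pdiv\tilde E$ in $L^2(U_1)$ and norms controlled by the data. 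Since $\tilde E$ has compact support, Fourier analysis on $\rN$ yields the Weitzenb\"ock-type identity
\[\|\nabla\tilde E\|_{L^2(\rN)}^2=\|\xrot\tilde E\|_{L^2(\rN)}^2+\|\pdiv\tilde E\|_{L^2(\rN)}^2,\]
which delivers $\tilde E\in H^1(U_1)$, and hence the asserted $L^2$-to-$H^1$ estimate for $E$ on $U_1^-$.

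\textbf{Inductive step.} Assume the statement at level $m-1$ and let $E$ satisfy the $m$-th hypotheses; the lower case applied to $E$ already gives $E\in\xhmq(U_1^-)$. For each tangential direction $j<N$ and all small $h$, the difference quotient $D_h^j E$ still lies in $\xrnq(U_1^-)\cap\eps^{-1}\xdq(U_1^-)$ with support in $\overline{U_\varrho^-}$. One has $\xrot D_h^j E=D_h^j\xrot E$ uniformly bounded in $\qh{m-1}{q+1}{}{}(U_1^-)$, and $\pdiv\eps D_h^j E=D_h^j\pdiv\eps E-\pdiv([D_h^j,\eps]E)$, where the commutator is essentially a shifted product $(D_h^j\eps)\cdot E$ and, because $E\in\xhmq(U_1^-)$ and $\eps$ is $\pc{m+1}$-admissible, its co-derivative is uniformly bounded in $\qh{m-1}{q-1}{}{}(U_1^-)$. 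Applying the inductive hypothesis to $D_h^j E$ yields a uniform-in-$h$ bound in $\xhmq(U_1^-)$, hence $\partial_j E\in\xhmq(U_1^-)$ for each $j<N$. This controls every mixed partial derivative of order $m+1$ involving at least one tangential index.

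\textbf{Main obstacle.} The sticking point is the purely normal derivative, since near $U_1^0$ no normal difference quotient is admissible. I would recover it algebraically: in flat coordinates with $E=E^\tau+\d x^N\wedge E^\nu$, the $\d x^N$-strand of $\xrot E$ contains $\partial_N E^\tau$ linearly and the corresponding part of $\pdiv\eps E$ contains $\partial_N E^\nu$ linearly, each with an invertible algebraic prefactor and each modulo tangential derivatives of $E$ and zero-order multiplications by the bounded coefficients of $\eps$ and $\partial\eps$. Solving these two identities expresses $\partial_N E$ through quantities already known to be in $\xhmq(U_1^-)$, closing the induction and giving the stated norm estimate. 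The delicate bookkeeping is to verify that the $\epsd$-contributions in the co-derivative equation and the commutator terms at each induction step are always one order smoother than the leading part, so that they can be absorbed by the previous step of the induction.
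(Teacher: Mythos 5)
Your inductive step (tangential difference quotients, with the commutator $\pdiv\big((\delta_h\eps)\tau_h^*E\big)$ absorbed one order lower) and your recovery of the normal derivative (reading $\p_NE^\tau$ off $\xrot E$ and $\p_N(\eps E)^\rho$ off $\xdiv\eps E$, then inverting the normal--normal block of $\eps$) coincide with the paper's argument, cf.\ \eqref{normalablE}, \eqref{normalablepsE} and Lemma \ref{hmschluss}. The reflection you describe (tangential components odd, normal components even) is also sound for the \emph{constant-coefficient} operators and is exactly the dual mirror operator $S_{\pdiv}$ of Lemma \ref{spiegel}.

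The gap is in your base case, and it is not a technicality: the claimed reduction to constant coefficients via
$\xdiv E=\xdiv\eps E-\xdiv(\epsd E)\in\xlq(U_1^-)$ ``with norm controlled by $\normu{E}{\xlq(U_1^-)}$'' is circular. Since $\eps$ is a pointwise linear transformation mixing the components of $E$, one has componentwise $\p_j(\epsd_{IJ}E_J)=(\p_j\epsd_{IJ})E_J+\epsd_{IJ}\p_jE_J$, and the second term involves \emph{all} first partial derivatives of $E$ --- precisely what you are trying to prove lie in $\lz(U_1^-)$. There is no Leibniz identity expressing $\xdiv(\epsd E)$ through $\xdiv E$ plus zero-order terms, so $\xdiv E\in\xlq(U_1^-)$ cannot be extracted from the hypotheses, and with it the whole reflection-plus-Fourier step for $E$ collapses. (Reflecting $\eps$ together with $E$ does not rescue this either: the identity $\norm{\nabla\tilde E}^2=\norm{\xrot\tilde E}^2+\norm{\xdiv\tilde E}^2$ holds for $\xdiv$, not for $\xdiv\eps$, so you would still need a whole-space regularity result for the variable-coefficient pair $(\xrot,\xdiv\eps)$, which is itself nontrivial --- cf.\ Lemma \ref{regganzraumeps}.) The paper circumvents this by running the difference-quotient argument already at $m=0$ and proving the dual estimate \eqref{zielabsch}: an arbitrary test form $\Phi\in\lzq(U^-_{\varrho_1})$ is split by the Hodge--Helmholtz decomposition of Lemma \ref{lzzerlegungen} as $\Phi=\xrot\Psi_1+\eps^\me\xdiv\Psi_2$, the potential $\Psi_1\in\xrnqm(U_1^-)\cap\xdqmn(U_1^-)$ is regularized by an induction over the rank $q$ (which is why the paper inducts over $q$ as well as $m$ --- your scheme omits this), the potential $\Psi_2$ is handled by the mirror operator and the Fourier Lemma \ref{fourier}, and the variable coefficient enters only through the harmless zero-order commutators \eqref{deltaepsab}. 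You will need some substitute for this duality argument (or for Lemma \ref{regganzraumeps} after reflection) before your base case can stand.
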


\begin{proof}
First let us discuss the case $N\ge 3$ by induction over $q$ and $m$\,.
Since we have $\xr{0}{}{\circ}(U_1^-)=\hon{1}{}(U_1^-)$ ($\rot$ acts as $\nabla$\,!) the case $q=0$ is trivial.
Moreover, because of $\xd{N}{}{}(U_1^-)=\h{1}{}{}(U_1^-)$ ($\pdiv$ acts as $\nabla$\,!) the case $q=N$ is trivial as well.
Thus we may assume $1\leq q\leq N-1$ and that the assertion is valid for $q-1$\,.
Let $m=0$\,. First we take care about the tangential derivatives and show
\begin{align}\begin{split}
\p_iE&\in\xlq(U_1^-)\qquad,\\
\norm{\p_iE}_{\xlq(U_1^-)}&\le c\normu{E}{\xrq(U_1^-)\cap\eps^{-1}\xdq(U_1^-)}
\end{split}\mylabel{tanabl}\end{align}
for $i=1,\dots,N-1$\,. By symmetry it is sufficient to consider $i=1$\,.
We choose some $\theta\in(0,1)$ satisfying $\varrho+4\theta<1$
and put $\varrho_j:=\varrho+j\theta$\,, $j=1,\dots,4$\,. For $0<|h|<\theta$ we introduce the mappings
$$\Abb{\tau_h}{\rz^N_-}{\rz^N_-}{x}{(x_1+h,x_2,\cdots,x_N)}\qqtext{,}
\delta_h:=\frac{1}{h}(\tau_h-\id)\qquad,$$
where $\rz^N_-:=\set{x\in\rN}{x_N<0}$\,. The pullback $\delta_h^*$ of the latter operator acts componentwise
as the differential quotient and commutates with $\xrot$\,, $*$ and thus also with $\xdiv$\,.
For all $F,G\in\xlq(U_1^-)$ with support in $\ol{U^-_{\varrho_3}}$ we have
with some constant $c>0$ independent of $h$ or $F$
\begin{align}\begin{split}
\spu{\delta_h^*F}{G}{\xlq(U_1^-)}&=-\spu{F}{\delta_{-h}^*G}{\xlq(U_1^-)}\qquad,\\
\delta_h^*\eps F&=\eps\delta_h^*F+(\delta_h\eps)\tau_h^*F\qquad,\\
\norm{\tau_h^*F}_{\xlq(U_1^-)}&\le c\norm{F}_{\xlq(U_1^-)}\qquad,\\
\normb{(\delta_h\eps)F}_{\xlq(U_1^-)}&\le c\norm{F}_{\xlq(U_1^-)}\qquad,
\end{split}\mylabel{deltaepsab}\end{align}
where $(\delta_h\eps)\Phi(x):=\big(\delta_h{\eps}_{J,I}(x)\big)\Phi_I(x)\d x^J$
with $\Phi(x)=\Phi_I(x)\dx^I$ and the matrix entries ${\eps}_{I,J}$ of $\eps$\,.
Following in straight lines \cite[Theorem 3.13]{agmon} we obtain for $m\in\nz$ and all
$F\in\xhmq(U_1^-)$ supported in $\ol{U^-_{\varrho_3}}$
$$\normu{\delta_h^*F}{\xhmmq(U_1^-)}\le\normu{F}{\xhmq(U_1^-)}\qquad.$$
To show \eqref{tanabl} by \cite[Theorem 3.15]{agmon} it suffices to prove
$$\norm{\delta_h^*E}_{\xlq(U^-_{\varrho_1})}\leq c\normu{E}{\xrq(U_1^-)\cap\eps^{-1}\xdq(U_1^-)}\qquad,$$
where $c>0$ is independent of $h$\,, $\varrho$ or $E$\,.
In turn this estimate follows by the even stronger estimate
\beq\big|\spu{\eps\delta_h^*E}{\Phi}{\xlq(U^-_{\varrho_1})}\big|
\leq c\normu{E}{\xrq(U_1^-)\cap\eps^{-1}\xdq(U_1^-)}\normu{\Phi}{\xlq(U^-_{\varrho_1})}\mylabel{zielabsch}\eeq
for all $\Phi\in\lzq(U^-_{\varrho_1})$\,, where $c>0$ is independent of $h$\,, $\varrho$\,, $E$ or $\Phi$\,.
Therefore, let $\Phi\in\lzq(U^-_{\varrho_1})$\,. According to Lemma \ref{lzzerlegungen} we decompose $\Phi$ 
(actually the extension by zero to $U_1^-$ of $\Phi$) orthogonally in $\lzq(U_1^-)$
$$\Phi=\Phi_1+\eps^\me\Phi_2\qquad,$$ 
where $\Phi_1\in\ol{\rot\xrnqm(U_1^-)}$ and $\Phi_2\in\ol{\pdiv\xdqp(U_1^-)}$
\big(closures in $\lzq(U_1^-)$\big), since $\dH{q}{}{}(U_1^-)$ vanishes by \cite[Satz 1, Satz 2]{picardharmdiff}
and thus $\dH{q}{}{\eps}(U_1^-)=\{0\}$ as well.
Moreover, by \eqref{rotabg}, \eqref{divabg}
we may assume $\Phi_1=\rot\Psi_1$ and $\Phi_2=\pdiv\Psi_2$ with
$\Psi_1\in\xrnqm(U_1^-)\cap\xdqmn(U_1^-)$ and $\Psi_2\in\xdqp(U_1^-)\cap\xrnqpn(U_1^-)$\,. Furthermore,
\eqref{abschaetzrotunddiv} yields a constant $c>0$ independent of $\Phi$\,, $\Phi_\ell$\,, $\Psi_\ell$\,, such that
$$\normu{\Psi_1}{\xrqm(U_1^-)}+\normu{\Psi_2}{\xdqp(U_1^-)}\le c\normu{\Phi}{\xlq(U^-_{\varrho_1})}$$
holds. Let $\chi\in\xsc{\infty}{\circ}(U_{\varrho_2})$ with $\restr{\chi}{U^-_{\varrho_1}}=1$\,.
Then the assumption of the induction for $\eps=\id$ yields $\Psi_1,\chi\Psi_1\in\qh{1}{q-1}{}{}(U_1^-)$ and
$$\normu{\chi\Psi_1}{\qh{1}{q-1}{}{}(U_1^-)}\leq c\normu{\Psi_1}{\qh{1}{q-1}{}{}(U_1^-)}
\le c\normu{\Psi_1}{\xrqm(U_1^-)}\le c\normu{\Phi}{\xlq(U^-_{\varrho_1})}\qquad.$$
Clearly the form $\chi\Psi_2$ possesses compact support in $U_{\varrho_2}^-\cup U_{\varrho_2}^0$
and by Lemma \ref{spiegel} and \eqref{divspiegel} the extension by zero of $S_{\pdiv}\chi\Psi_2$ to $\rN$
is an element of $\Dqpe{}(\rN)$\,. Hence we have $\tilde{\Phi}_2:=\xdiv S_{\pdiv}\chi\Psi_2\in\xdqn(\rN)$
with $\supp\tilde{\Phi}_2\subset U_{\varrho_2}$ and $\tilde{\Phi}_2\big|_{U^-_{\varrho_1}}=\Phi_2$\,.
Lemma \ref{fourier} yields some $(q+1)$-form $H\in\xh{1}{q+1}{}(\rN)$ satisfying $\xdiv H=\tilde{\Phi}_2$ and
furthermore the estimate $\normu{H}{\xh{1}{q+1}{}(\rN)}\le c\normu{\Phi}{\xlq(U^-_{\varrho_1})}$\,.
Using $\Phi=\rot\chi\Psi_1+\eps^\me\pdiv\chi H$ in $U^-_{\varrho_1}$ and \eqref{deltaepsab} as well as
$\delta_{-h}^*(\chi\Psi_1)\in\xrnqm(U_1^-)$\,, $E\in\xrnq(U_1^-)$ we get
\begin{align*}
&\qquad\qquad\skp{\eps\delta_h^*E}{\Phi}_{\xlq(U^-_{\varrho_1})}\\
&=\skpb{\delta_h^*(\eps E)}{\Phi}_{\xlq(U^-_{\varrho_1})}
-\skpb{(\delta_h\eps)\tau_h^*E}{\Phi}_{\xlq(U^-_{\varrho_1})}\\
&=-\skpb{\eps E}{\xrot\delta_{-h}^*(\chi\Psi_1)}_{\xlq(U_1^-)}
-\skpb{E}{\xdiv\delta_{-h}^*(\chi H)}_{\xlq(U_1^-)}\\
&\qquad-\skpb{\eps E}{(\delta_{-h}\eps^{-1})\tau_{-h}^*\xdiv\chi H}_{\xlq(U_1^-)}
-\skpb{(\delta_h\eps)\tau_h^*E}{\Phi}_{\xlq(U^-_{\varrho_1})}\\
&=\skpb{\pdiv\eps E}{\delta_{-h}^*(\chi\Psi_1)}_{\xlq(U_1^-)}
+\skpb{\rot E}{\delta_{-h}^*(\chi H)}_{\xlq(U_1^-)}\\
&\qquad-\skpb{\eps E}{(\delta_{-h}\eps^{-1})\tau_{-h}^*\xdiv\chi H}_{\xlq(U_1^-)}
-\skpb{(\delta_h\eps)\tau_h^*E}{\Phi}_{\xlq(U^-_{\varrho_1})}\qquad,
\end{align*}
which immediately implies \eqref{zielabsch}. Hence \eqref{tanabl} is proved.

The normal partial derivative $\p_NE$ may be discussed as follows. By the usual formula
$$\d E=\d(E_I\,\d x^I)=\p_jE_I\,\d x^j\wedge\d x^I=(\pm\p_jE_I)\,\d x^{I+j}$$
we get
\beq\pm\p_NE_I=(\rot E)_{I+N}-\sum_{I\ni j=1}^{N-1}\pm\p_jE_{I+N-j}\in\lz(U_1^-)\mylabel{normalablE}\eeq
for all $I\not\ni N$ and thus $E^\tau\in\qh{1}{q}{}{}(U_1^-)$ with the decomposition from \eqref{taurhodecodef}.
The usual formula for the co-derivative reads
$$\delta H=\delta(H_I\,\d x^I)=(\pm\p_jH_I)*(\d x^j\wedge*\d x^I)=(\pm\p_jH_I)\,\d x^{I-j}\qquad.$$
By $\p_i(\eps E)=(\p_i\eps)E+\eps\p_iE$ we obtain $\p_i(\eps E)\in\lzq(U_1^-)$
for $i=1,\dots,N-1$ and hence
\beq\pm\p_N(\eps E)_I=(\pdiv\eps E)_{I-N}-\sum_{I\not\ni j=1}^{N-1}\pm\p_j(\eps E)_{I-N+j}\in\lz(U_1^-)\mylabel{normalablepsE}\eeq
for all $I\ni N$\,. Therefore, $(\eps E)^\rho\in\qh{1}{q}{}{}(U_1^-)$\,.

Now Lemma \ref{hmschluss} yields $E\in\qh{1}{q}{}{}(U_1^-)$ and the case $m=0$ is proved.

Let $m\geq1$ and our assertions be valid for $m-1$ as well as the assumptions be given for $m$\,.
We consider $E,\eps E\in\xhmq(U_1^-)$ with $E\in\ronq{}(U_1^-)\cap\eps^\me\pdq{}(U_1^-)$\,,
$\supp E\subset \ol{U_\varrho^-}$ and
$$\xrot E\in\xhmqp(U_1^-)\qqtext{,}\xdiv\eps E\in\xhmqm(U_1^-)\qquad.$$
Moreover, we have the estimate
$$\normu{E}{\xhmq(U_1^-)}\le c\big(\normu{E}{\xlq(U_1^-)}+\normu{\xrot E}{\xhmmqp(U_1^-)}+\normu{\xdiv\eps E}{\xhmmqm(U_1^-)}\big)\qquad.$$
For sufficiently small $h$ we have $\delta_h^*E\in\ronq{}(U_1^-)$ and $\delta_h^*E$ resp. $\delta_h^*\rot E$
converges weakly to $\p_1E$ resp. $\p_1\rot E$ in $\lzq(U_1^-)$ resp. $\lzqpe(U_1^-)$ as $h\to0$\,.
Thus $\p_1E\in\ronq{}(U_1^-)$ and $\rot\p_1E=\p_1\rot E$\,. 
Analogously we get $\p_iE\in\ronq{}(U_1^-)$ and $\rot\p_iE=\p_i\rot E$ 
for all indices $i=2,\dots,N-1$\,.
Hence all tangential derivatives $\p_iE\in\ronq{}(U_1^-)\cap\eps^\me\pdq{}(U_1^-)$\,, $i=1,\dots,N-1$\,, satisfy
\begin{align*}
\rot\p_iE=\p_i\rot E\in\xhmmqp(U_1^-)\qquad,\\
\xdiv\eps\p_iE=\p_i\xdiv\eps E-\xdiv(\p_i\eps)E\in\xhmmqm(U_1^-)\qquad,
\end{align*}
which implies $\p_iE\in\xhmq(U_1^-)$ and also $\p_i(\eps E)\in\xhmq(U_1^-)$ by assumption.
By \eqref{normalablE} and \eqref{normalablepsE} we obtain $\p_NE^\tau,\p_N(\eps E)^\rho\in\xhmq(U_1^-)$
and thus $E^\tau,(\eps E)^\rho\in\xhmpq(U_1^-)$ as well.
Finally we achieve by Lemma \ref{hmschluss} $E\in\xhmpq(U_1^-)$\,,
which completes the induction and hence the proof for $N\geq3$\,.

The only non trivial remaining case is $N=2$\,, $q=1$\,. But this case can be proved
similarly to the case $N\ge3$ without using Lemma \ref{fourier},
since then $\Psi_2$ is even an element of $\pdi{2}{}{}{}(U_1^-)=\qh{1}{2}{}{}(U_1^-)$\,.
\end{proof}

\end{proof}

\subsection{Exterior domains}

\begin{proof}{\bf of Theorem \ref{regularityunbd} }\mylabel{extdomsec}
Extending the boundary form $\xgt E$ to $\om$ by \eqref{gthdefaussen} via
$$\Eh:=\xchgt\xgt E\in\qhom{m+1}{q}{s}{}\qqtext{,}\supp\Eh\text{ compact}$$
yields that $\Es:=E-\Eh$ satisfies the assumptions of Theorem \ref{regularityunbd} with homogeneous tangential trace.
Hence we may assume $\xgt E=0$\,, i.e. $E\in\Ronqsom\cap\eps^\me\Dqsom$\,, since $\xchgt$ is continuous.

Let us assume for a moment that Theorem \ref{regularityunbd} holds in the special case $\om=\rN$\,.
Moreover, let $\eta$ denote a smooth cut-off function, which vanishes near $\dom$ and equals $1$ near infinity.
Then by Theorem \ref{regularityunbd} in the whole space case
$\eta E\in\qH{m+1}{q}{s}{}(\rN)$ resp. $\eta E\in\qh{m+1}{q}{s}{}(\rN)$\,.
Furthermore, Theorem \ref{regularitybd} may be applied to the truncated form
$(1-\eta)E\in\ronq{}(\omb)\cap\eps^\me\pdq{}(\omb)$ with some adequate bounded subdomain $\omb\subset\om$
yielding $(1-\eta)E\in\qh{m+1}{q}{}{}(\omb)$\,.
Then extending $(1-\eta)E$ by zero into the whole of $\om$ leads to $(1-\eta)E\in\qhom{m+1}{q}{s}{}$\,.
The estimates follow by induction.
Hence, our proof is reduced to the following assertion for the special model case $\om=\rN$\,:

\begin{lem}\mylabel{wholespaceregularity}
Let $s\in\rz$\,, $m\in\nzn$ and $\eps$ be some $\pc{m+1}$-admissible transformation as well as
$E\in\Rqs(\rN)\cap\eps^\me\Dqs(\rN)$\,.
\begin{itemize}
\item[\rm\bf (i)] Then $\rot E\in\qH{m}{q+1}{s}{}(\rN)$ and $\pdiv\eps E\in\qH{m}{q-1}{s}{}(\rN)$
imply $E\in\qH{m+1}{q}{s}{}(\rN)$ and with some constant $c>0$
$$\norm{E}_{\qH{m+1}{q}{s}{}(\rN)}\leq c\big(\norm{E}_{\Lzqs(\rN)}+\norm{\rot E}_{\qH{m}{q+1}{s}{}(\rN)}
+\norm{\pdiv\eps E}_{\qH{m}{q-1}{s}{}(\rN)}\big)$$
holds uniformly with respect to $E$\,.
\item[\rm\bf (ii)] If additionally $\eps$ is a $0$-$\pc{m+1}$-admissible transformation of second kind
and $\tau$-$\pc{0}$-admissible of first (or second) kind with some $\tau>0$
then $\rot E\in\qh{m}{q+1}{s+1}{}(\rN)$ and $\pdiv\eps E\in\qh{m}{q-1}{s+1}{}(\rN)$
imply $E\in\qh{m+1}{q}{s}{}(\rN)$ and there exists some positive constant $c$\,, such that the estimate
$$\norm{E}_{\qh{m+1}{q}{s}{}(\rN)}\leq c\big(\norm{E}_{\Lzqs(\rN)}+\norm{\rot E}_{\qh{m}{q+1}{s+1}{}(\rN)}
+\norm{\pdiv\eps E}_{\qh{m}{q-1}{s+1}{}(\rN)}\big)$$
holds uniformly with respect to $E$\,.
\end{itemize}
\end{lem}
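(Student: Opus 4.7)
The plan is to combine a Fourier analysis argument for the constant-coefficient model $\eps=\id$ with an induction on $m$ that treats the perturbation $\epsd=\eps-\id$ via commutators. The first step handles the case $\eps=\id$ on $\rN$. Both $\rot$ and $\pdiv$ diagonalise under the Fourier transform, and for a Schwartz $q$-form $E$ the pointwise Koszul identity $|\xi|^2|\hat E(\xi)|^2=|\xi\wedge\hat E(\xi)|^2+|\xi\lrcorner\hat E(\xi)|^2$ combined with Plancherel gives
$$\norm{\nabla E}_{L^2(\rN)}^2 = \norm{\rot E}_{L^2(\rN)}^2 + \norm{\pdiv E}_{L^2(\rN)}^2\,.$$
This is essentially the content of Lemma~\ref{fourier}, and a density argument extends it to the graph-norm closures.

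Next I would insert the weight. For $s\in\rz$ I dyadically decompose $\rN$ by cutoffs $\chi_k$ supported on shells $A_k=\{2^{k-1}\leq|x|\leq 2^{k+1}\}$ with $|\nabla\chi_k|=O(2^{-k})$, apply the previous step to $\chi_k E$, and use $\rot(\chi_k E)=\chi_k\rot E+\d\chi_k\wedge E$ and $\pdiv(\chi_k E)=\chi_k\pdiv E-\d\chi_k\lrcorner E$. On the shell $A_k$, where $\rho\sim 2^k$, the cutoff-commutator terms naturally carry an extra factor of $\rho^{-1}$. Multiplying by $\rho^{2s}\sim 2^{2ks}$ and summing over $k$ yields (i) for $\eps=\id$ at $m=0$; multiplying by $\rho^{2(s+1)}$ instead, the built-in $\rho^{-1}$ of the cutoff commutators absorbs the missing unit of weight, yielding (ii) for $\eps=\id$ at $m=0$. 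To remove the constant-coefficient assumption, the identity
$$\pdiv E = \eps^{-1}\pdiv(\eps E) + [\pdiv,\eps^{-1}](\eps E)\,,$$
whose commutator acts as pointwise multiplication by $-\eps^{-1}(\d\eps)\eps^{-1}$, reduces the control of $\pdiv E$ to that of $\pdiv(\eps E)$ plus a zeroth-order error in $E$ — sufficient to close (i). For (ii) the commutator must additionally contribute $\rho^{-1}$, which is exactly what second-kind admissibility guarantees through $\d\epsd=O(\rho^{-1})$, while the $\tau$-first-kind decay handles the zeroth-order term $\epsd\cdot E$.

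For the induction on $m$, assuming the lemma for $m-1$ places $E$ in $\qH{m}{q}{s}{}(\rN)$ (resp.\ $\qh{m}{q}{s}{}(\rN)$). For $|\alpha|\leq m$ the form $\p^\alpha E$ satisfies
\begin{align*}
\rot\p^\alpha E &= \p^\alpha\rot E\,,\\
\pdiv\,\eps\,\p^\alpha E &= \p^\alpha\pdiv(\eps E) + \pdiv[\eps,\p^\alpha]E\,,
\end{align*}
where $[\eps,\p^\alpha]E$ is a sum of products $\p^\beta\epsd\cdot\p^\gamma E$ with $1\leq|\beta|\leq m$ and $|\gamma|\leq m-1$, all controlled by the induction and the $\pc{m+1}$-admissibility of $\eps$. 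Feeding each $\p^\alpha E$ into the $m=0$ result upgrades $E$ to $\qH{m+1}{q}{s}{}(\rN)$ (resp.\ $\qh{m+1}{q}{s}{}(\rN)$).

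The hard part will be Part~(ii). In the bold-weighted spaces the weight grows with $|\alpha|$, so each commutator coefficient $\p^\beta\epsd$ must contribute exactly $\rho^{-|\beta|}$ of decay, otherwise the right-hand side of the inductive estimate escapes the target space. This is precisely why the hypothesis of (ii) builds in second-kind admissibility, giving $\p^\beta\epsd=O(\rho^{-|\beta|})$ for $1\leq|\beta|\leq m+1$, together with a $\tau$-first-kind assumption supplying the missing $o(1)$ decay of $\epsd$ itself, needed for the bottom-level term $\epsd\cdot E$. Bookkeeping these matched weights through every commutator of the induction is where the principal technical work sits.
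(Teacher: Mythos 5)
Your reduction of the variable-coefficient case to the constant-coefficient one is where the argument breaks. You claim that $[\pdiv,\eps^\me]$ acts as pointwise multiplication, i.e.\ that $\pdiv(\eps^\me H)$ differs from ``$\eps^\me\pdiv H$'' only by a zeroth-order term in $H$. That is true when $\eps$ is a scalar multiple of the identity, but the transformations admitted here are general symmetric matrix fields acting on the components of a $q$-form. Writing $H=H_I\,\d x^I$ and $(\pdiv H)_K=\sum_{j}\pm\p_jH_{K+j}$, one finds that $\pdiv(\eps^\me H)$ contains the full contraction $\sum_j\pm(\eps^\me)_{K+j,I}\,\p_jH_I$, a genuinely first-order expression in $H$ that is not controlled by $\pdiv H$ alone (note also that $\eps^\me$ acts on $q$-forms while $\pdiv H$ is a $(q-1)$-form, so the claimed commutator does not even typecheck). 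Consequently you cannot conclude $\pdiv E\in\qLz{q-1}{}(\rN)$ from $\pdiv\eps E\in\qLz{q-1}{}(\rN)$; bounding the error term would require exactly the first-order control on $E$ that the lemma is supposed to produce, so the step is circular. The same defect resurfaces in part (ii): $\pdiv(\eta_t\epsd E)$ again produces first-order terms in $E$, which must be absorbed into the left-hand side (the paper does this by splitting $\rN$ into a large ball and its complement and exploiting $\tau>0$, see \eqref{trick}), not merely estimated as zeroth-order errors.

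The paper closes this gap differently. It first proves $\pRq{}(\rN)\cap\pDq{}(\rN)=\qH{1}{q}{}{}(\rN)$ by Plancherel and the identity \eqref{dnrotdiv} (your first step, essentially), and then, for general $\eps$, decomposes $E=\rot\Phi+\Psi$ by the Hodge--Helmholtz decomposition with $\pdiv\Psi=0$ and $\rot\Psi=\rot E$, so that $\Psi\in\qH{1}{q}{}{}(\rN)$ follows from the identity case, while $\Phi$ solves the second-order elliptic system $\pdiv\eps\rot\Phi=F$, $\pdiv\Phi=0$, which is treated by a difference-quotient argument. Some such coupling of the two first-order equations is unavoidable for matrix-valued $\eps$. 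Your remaining steps --- the dyadic treatment of the weight in (i), which is a legitimate alternative to the paper's direct use of $\rho^s$ and the commutator formula \eqref{RTcomformula}, and the bookkeeping $\p^\beta\epsd=\calO(r^{-|\beta|})$ in the induction on $m$ --- are sound and parallel to the paper's, but they rest on the broken base case. A small misattribution: the identity $\sum_n\norm{\p_n E}^2=\norm{\rot E}^2+\norm{\pdiv E}^2$ is \eqref{dnrotdiv}, not Lemma \ref{fourier}, which instead constructs a potential $H$ with $\pdiv H=E$ for divergence-free compactly supported $E$.
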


\begin{proof}
Our induction over $m$ starts with $m=0$\,.

\begin{lem}\mylabel{regganzraumeps}
Let $\eps$ be $\pc{1}$-admissible. Then $\pRq{}(\rN)\cap\eps^\me\pDq{}(\rN)=\qH{1}{q}{}{}(\rN)$ 
holds with equivalent norms depending on $\eps$\,.
\end{lem}

\begin{proof}
Partial integration, i.e. Stokes' theorem, and the well known formula $\rot\pdiv+\pdiv\rot=\Delta$
(Here the Laplacian $\Delta$ acts componentwise with respect to Euclidian coordinates.) yield
\beq\forall\;\Phi\in\cqun(\rN)\quad\sum_{n=1}^{N}\norm{\p_n\Phi}_{\Lzq{}(\rN)}^2=\norm{\rot\Phi}_{\Lzqpe{}(\rN)}^2
+\norm{\pdiv\Phi}_{\qLz{q-1}{}(\rN)}^2\quad.\mylabel{dnrotdiv}\eeq
A combination of this identity and Fourier's transformation, i.e. \eqref{fouriersieben}-\eqref{fourierneun} 
and \eqref{RTformula}, implies
\beq\pRq{}(\rN)\cap\pDq{}(\rN)=\qH{1}{q}{}{}(\rN)\mylabel{regganzraum}\eeq
with equal norms, since $\cqun(\rN)$ is dense in $\qH{1}{q}{}{}(\rN)$\,.

Now let $E\in\pRq{}(\rN)\cap\eps^\me\pDq{}(\rN)$\,.
By \cite[Lemma 1, Lemma 7]{potential} 
\big(See also \cite{decomposition} as well as Appendix \ref{apphodgedeco} and \ref{appcomp}.\big)
we decompose $E=\rot\Phi+\Psi$ according to
$$\Lzq{}(\rN)=\ol{\rot\pR{q-1}{}{}{}(\rN)}\oplus\dqn{}(\rN)
=\rot\big(\pr{q-1}{-1}{}{}(\rN)\cap\pdi{q-1}{-1}{0}{}(\rN)\big)\oplus\dqn{}(\rN)$$
observing $\rot\Psi=\rot E$ and $\pdiv\Psi=0$\,.
By \eqref{regganzraum} we obtain $\Psi\in\qH{1}{q}{}{}(\rN)$ 
and the estimate $\norm{\Psi}_{\qH{1}{q}{}{}(\rN)}\leq c\norm{E}_{\pRq{}(\rN)}$
with some constant $c>0$\,. Hence $\eps\Psi\in\qH{1}{q}{}{}(\rN)$ and $\Phi$ solves the elliptic system
$$\pdiv\eps\rot\Phi=\pdiv\eps E-\pdiv\eps\Psi=:F\in\qLz{q-1}{}(\rN)\qqtext{,}\pdiv\Phi=0\qquad,$$
where $\norm{F}_{\Lzq{}(\rN)}\leq c\norm{E}_{\pRq{}(\rN)\cap\eps^\me\pDq{}(\rN)}$\,.
Using the operators 
$$\Abb{\tau_{h,i}}{\rN}{\rN}{x}{(x_1,\cdots,x_{i-1},x_{i}+h,x_{i+1},\cdots,x_N)}\qquad,$$
$$\delta_{h,i}:=\frac{1}{h}(\tau_{h,i}-\id)$$
for $i=1,\dots,N$ and $h>0$ defined on $\rN$ 
corresponding to $\tau_{h}=\tau_{h,1}$ and $\delta_{h}=\delta_{h,1}$ defined on $\rz^N_{-}$
from the proof of Theorem \ref{regularitybd}
as well as $\norm{\tau_{h,i}^*\phi}_{\Lzq{}(\rN)}=\norm{\phi}_{\Lzq{}(\rN)}$ and the estimates
$$\norm{\delta_{h,i}^*\phi}_{\Lzq{}(\rN)}\leq\norm{\p_i\phi}_{\Lzq{}(\rN)}\qqtext{,}
\norm{\rot\phi}_{\Lzqpe{}(\rN)}\leq\sum_{n=1}^{N}\norm{\p_n\phi}_{\Lzq{}(\rN)}$$ 
we get
$$\skp{\eps\delta_{h,i}^*\rot\Phi}{\rot\phi}_{\Lzq{}(\rN)}
=\skp{\pdiv\eps\rot\Phi}{\delta_{-h,i}^*\phi}_{\qLz{q-1}{}(\rN)}
-\skpb{\rot\Phi}{(\delta_{-h,i}\eps)\tau_{-h,i}^*\rot\phi}_{\Lzq{}(\rN)}$$
and thus by \eqref{dnrotdiv} uniformly with respect to $\phi$ and $h$

\begin{align*}
\big|\skp{\eps\delta_{h,i}^*\rot\Phi}{\rot\phi}_{\Lzq{}(\rN)}\big|
&\leq c\norm{E}_{\pRq{}(\rN)\cap\eps^\me\pDq{}(\rN)}\sum_{n=1}^{N}\norm{\p_n\phi}_{\qLz{q-1}{}(\rN)}\\
&\leq c\norm{E}_{\pRq{}(\rN)\cap\eps^\me\pDq{}(\rN)}\big(\norm{\rot\phi}_{\Lzq{}(\rN)}+\norm{\pdiv\phi}_{\qLz{q-2}{}(\rN)}\big)
\end{align*}
for all $\phi\in\qc{\infty}{q-1}{\circ}(\rN)$\,.
By this estimate and since $\qc{\infty}{q-1}{\circ}(\rN)$ is a dense subset of 
$\pr{q-1}{-1}{}{}(\rN)\cap\pdi{q-1}{-1}{}{}(\rN)$ we obtain
$$\norm{\delta_{h,i}^*\rot\Phi}_{\Lzq{}(\rN)}\leq c\norm{E}_{\pRq{}(\rN)\cap\eps^\me\pDq{}(\rN)}\qquad,$$
where the constant $c>0$ is independent of $h$\,. Therefore,   $\rot\Phi\in\qH{1}{q}{}{}(\rN)$ and the estimates
$\norm{\p_i\rot\Phi}_{\Lzq{}(\rN)}\leq c\norm{E}_{\pRq{}(\rN)\cap\eps^\me\pDq{}(\rN)}$\,, $i=1,\dots,N$\,, hold,
which completes the proof.
\end{proof}

Now we may proceed with the induction start.
Let $E\in\Rqs(\rN)\cap\eps^\me\Dqs(\rN)$\,. We have $\rho^sE\in\Lzq{}(\rN)$ and by \eqref{RTcomformula}
\begin{align*}
\rot\,(\rho^sE)&=\rho^s\rot\,E+s\rho^{s-2}RE\in\Lzqpe{}(\rN)\qquad,\\
\pdiv(\rho^s\eps E)&=\rho^s\pdiv\eps E+s\rho^{s-2}T\eps E\in\qLz{q-1}{}(\rN)\qquad.
\end{align*}
Thus, using Lemma \ref{regganzraumeps} $\rho^sE\in\pRq{}(\rN)\cap\eps^\me\pDq{}(\rN)=\qH{1}{q}{}{}(\rN)$ follows and
$$\p_n(\rho^sE)=\rho^s\p_nE+s\rho^{s-2}\mathcal{X}_nE\in\Lzq{}(\rN)$$
yields {\bf (i)} with the desired estimates. Looking at 
$$E\in\rqs(\rN)\cap\eps^\me\dqs(\rN)\subset\Rqs(\rN)\cap\eps^\me\Dqs(\rN)$$
we obtain $E\in\qH{1}{q}{s}{}(\rN)$ by {\bf (i)}.
Therefore, it only remains to show $\p_nE\in\Lzq{s+1}(\rN)$ for $n=1,\dots,N$\,.
We choose a real smooth cut-off function $\varphi$ with $\varphi=1$ on $(-\infty,1]$ and $\varphi=0$ on $[2,\infty)$
and set $\eta_{t}:=\varphi(r/t)$\,.
Then we calculate with \eqref{dnrotdiv} or \eqref{regganzraum} uniformly with respect to $t\in\rzp$
\begin{align*}
&\qquad\normb{\p_n(\eta_t E)}_{\Lzq{s+1}(\rN)}\\
&\leq c\Big(\normb{\p_n(\ub{\rho^{s+1}\eta_t E}_{\in\qH{1}{q}{}{}(\rN)})}_{\Lzq{}(\rN)}
+\normb{(s+1)\rho^{s-1}\mathcal{X}_n\eta_t E}_{\Lzq{}(\rN)}\Big)\\
&\leq c\Big(\normb{\rot\,(\rho^{s+1}\eta_t E)}_{\Lzqpe{}(\rN)}
+\normb{\pdiv(\rho^{s+1}\eta_t E)}_{\qLz{q-1}{}(\rN)}+\norm{\eta_t E}_{\Lzq{s}(\rN)}\Big)\\
&\leq c\Big(\norm{\eta_t E}_{\rqs(\rN)\cap\eps^\me\dqs(\rN)}
+\normb{\pdiv(\eta_t\epsd E)}_{\qLz{q-1}{s+1}(\rN)}\Big)\\
&\leq c\Big(\norm{\eta_t E}_{\rqs(\rN)\cap\eps^\me\dqs(\rN)}
+\sum_{m=1}^{N}\normb{\p_m(\eta_t E)}_{\Lzq{s+1-\tau}(\rN)}\Big)\qquad.
\end{align*}
Since $\tau>0$ and decomposing $\rN=\ol{U_\vartheta}\cup A_\vartheta$ we get for all $\vartheta\in\rzp$
\begin{align}
\begin{split}
&\qquad\normb{\p_m(\eta_t E)}_{\Lzq{s+1-\tau}(\rN)}^2\\
&\leq c_\vartheta\normb{\p_m(\eta_t E)}_{\Lzq{s}(\rN)}^2
+(1+\vartheta^2)^{-\tau}\normb{\p_m(\eta_t E)}_{\Lzq{s+1}(\rN)}^2
\end{split}\mylabel{trick}
\end{align}
with some constant $c_\vartheta>0$ depending only on $\vartheta$ and $\tau$\,.
Here we have 
$$A_{\vartheta}:=\rN\ohne\ol{U_{\vartheta}}=\setb{x\in\rN}{|x|>\vartheta}\qquad.$$
A combination of the latter two estimates yields for some sufficient large $\vartheta$ and with {\bf (i)}
\begin{align*}
&\qquad\sum_{n=1}^{N}\normb{\p_n(\eta_t E)}_{\Lzq{s+1}(\rN)}\\
&\leq c\Big(\norm{\eta_t E}_{\qH{1}{q}{s}{}(\rN)}+\normb{\rot\,(\eta_t E)}_{\Lzqpe{s+1}(\rN)}
+\normb{\pdiv(\eta_t\eps E)}_{\qLz{q-1}{s+1}(\rN)}\Big)\\
&\leq c\big(\norm{E}_{\rqs(\rN)\cap\eps^\me\dqs(\rN)}
+\norm{t^\me r^\me RE}_{\Lzqpe{s+1}(Z_{t,2t})}+\norm{t^\me r^\me T\eps E}_{\qLz{q-1}{s+1}(Z_{t,2t})}\big)\quad,
\end{align*}
where $Z_{t,T}:=A_{t}\cap U_{T}=\setb{x\in\rN}{t<|x|<T}$\,.
Using $t^\me\leq2r^\me$ in $Z_{t,2t}$ we finally obtain the estimate
$$\sum_{n=1}^{N}\norm{\p_nE}_{\Lzq{s+1}(U_t)}\leq\sum_{n=1}^{N}\normb{\p_n(\eta_t E)}_{\Lzq{s+1}(\rN)}
\leq c\norm{E}_{\rqs(\rN)\cap\eps^\me\dqs(\rN)}\qquad,$$
which holds uniformly with respect to $t$\,. Thus letting $t\to\infty$ the monotone convergence theorem
implies $E\in\qh{1}{q}{s}{}(\rN)$ and the desired estimate.
Hence {\bf (ii)} is proved and thus the case $m=0$ is completed.
 
For the induction step we assume $\eps$ to be $\pc{m+1}$-admissible and
$$\rot\, E\in\qH{m}{q+1}{s}{}(\rN)\qqtext{,}\pdiv\eps E\in\qH{m}{q-1}{s}{}(\rN)\qquad.$$
The assertion for $m-1$ yields $E\in\qH{m}{q}{s}{}(\rN)$ and the corresponding estimate. Then for $n=1,\dots,N$ we get
$\p_nE\in\Lzqs(\rN)$\,, $\rot\,\p_nE\in\qH{m-1}{q+1}{s}{}(\rN)$ and
$$\pdiv(\eps\p_nE)=\p_n\pdiv\eps E-\pdiv\big((\p_n\eps)E\big)\in\qH{m-1}{q-1}{s}{}(\rN)\qquad.$$
Using once again the assumption for $m-1$ we obtain $\p_nE\in\qH{m}{q}{s}{}(\rN)$ and
\begin{align*}
&\qquad\norm{\p_nE}_{\qH{m}{q}{s}{}(\rN)}\\
&\leq c\Big(\norm{\p_nE}_{\Lzq{s}(\rN)}+\norm{\rot\,\p_nE}_{\qH{m-1}{q+1}{s}{}(\rN)}
+\normb{\pdiv(\eps\p_nE)}_{\qH{m-1}{q-1}{s}{}(\rN)}\Big)
\end{align*}
for $n=1,\dots,N$\,. Hence $E\in\qH{m+1}{q}{s}{}(\rN)$ and

\begin{align*}
\norm{E}_{\qH{m+1}{q}{s}{}(\rN)}&\leq c\big(\norm{E}_{\qH{m}{q}{s}{}(\rN)}+\sum_{n=1}^{N}\norm{\p_nE}_{\qH{m}{q}{s}{}(\rN)}\big)\\
&\leq c\big(\norm{E}_{\qH{m}{q}{s}{}(\rN)}+\norm{\rot\,E}_{\qH{m}{q+1}{s}{}(\rN)}+\norm{\pdiv\eps E}_{\qH{m}{q-1}{s}{}(\rN)}\big)\qquad.
\end{align*}
This shows {\bf (i)}.
Similarly we prove {\bf (ii)} paying attention to the fact
that the weights in the $\norm{\,\cdot\,}_{\qh{m}{q}{s}{}(\rN)}$-norms grow with the number of
derivatives and that this effect is compensated by the decay properties of $\epsd$ and its derivatives.
\end{proof}

\end{proof}

\begin{rem}\mylabel{regganzraumepsrem}
Lemma \ref{regganzraumeps} and Lemma \ref{wholespaceregularity} as well as an obvious cutting technique 
easily yield inner regularity results. These even include weighted inner regularity in exterior domains.
\end{rem}

\appendix

\section{Appendix}

As before let $M$ be a $N$-dimensional smooth Riemannian manifold
and let $\om\subset M$ denote some connected open subset with compact closure in $M$
or some exterior domain of $M=\rN$\,. 
Moreover, throughout this appendix $\nu$ denotes some admissible transformation.

\subsection{Density results}

Let $\om\subset M$ be a connected open subset with compact closure of the $N$-dimensional smooth Riemannian manifold $M$\,.
Using charts and the results and techniques known from the scalar cases, i.e. mollifiers, 
we get the following assertions for $m\in\nzn$\,:
$$\cquom\cap\Hqmom\qqtext{is dense in}\Hqmom\qquad.$$
If $\om$ has the `{\it segment property}', i.e. for each $x\in\dom$ there exist a chart $(V,h)$\,,
some $\varrho\in(0,1)$ and some vector $v\in\rN$ with $h(x)=0$\,, $h(\xolv)=\xolu_1$ and
$$U_\varrho\cap\ol{h(\xo\cap V)}+\tau v\subset h(\xo\cap V)$$
for all $\tau\in(0,1)$ 
\big(Please compare to \cite[Definition 2.1]{agmon} for the classical segment property.
We note that manifolds with $\pc{1}$-boundary possess the segment property.\big),
we can adopt more properties from the scalar cases. For example,
\beq\cqu(\omq)\qqtext{is dense in}\Hqmom\mylabel{hdicht}\eeq
as well as $E\in\Hqonmom$ for some $E\in\Hqmom$\,, if and only if its extension by zero into $\tilde{\om}$ is an element
of $\Hqm(\tilde{\om})$ for any open set $\tilde{\om}$ with $\om\subset\omq\subset\tilde{\om}\subset\ol{\tilde{\om}}\subset M$\,.
The first assertion may be proved analogously to \cite[Theorem 3.6]{wloka} or \cite[Theorem 2.1]{agmon}
and the second analogously to \cite[Theorem 3.7]{wloka}. The same techniques yield finally
\beq\cqu(\omq)\qqtext{is dense in}\Rqom{}\qqtext{resp.}\Dqom{}\qquad.\mylabel{lemm0}\eeq
Especially for $\om=M=\rN$ we also have for all $s\in\rz$ that
$\cqun(\rN)$ is dense in $\Rqs(\rN)$\,, $\rqs(\rN)$\,, $\Dqs(\rN)$\,, $\dqs(\rN)$\,, $\Rqs(\rN)\cap\Dqs(\rN)$\,, 
$\rqs(\rN)\cap\dqs(\rN)$\,, $\Hqms(\rN)$ and $\hqms(\rN)$\,.

\subsection{Hodge-Helmholtz decompositions}\mylabel{apphodgedeco}

By the projection theorem, the $\Lzqom{}$-orthogonality of
$\rot\pR{q-1}{}{}{\circ}(\Omega)$ and $\dqnom{}$ as well as $\pdiv\Dqpeom{}$ and $\ronqnom{}$ and
the obvious inclusions $\rot\pR{q-1}{}{}{\circ}(\Omega)\subset\xrnqn\vono$ 
as well as $\pdiv\Dqpeom{}\subset\xdqn\vono$ we get the following
Hodge-Helmholtz decompositions \big(For details please see \cite[Lemma 1]{potential}, \cite[Lemma 1]{decomposition}
or in the classical case \cite[p. 168]{boundaryelectro}, \cite[Lemma 3.13]{xmas}.\big):

\begin{lem}\mylabel{lzzerlegungen}
The following $\skp{\eps\,\cdot\,}{\,\cdot\,}_{\lzqom}$-orthogonal (denoted by $\oplus_\eps$) decompositions hold for
admissible transformations $\eps$\,:
\begin{align*}
\text{\rm\bf{ (i)}}&&\lzqom
&=\ol{\rot\pR{q-1}{}{}{\circ}(\Omega)}\oplus_\eps\eps^\me\dqnom{}=\ronqnom{}\oplus_\eps\eps^\me\ol{\pdiv\Dqpeom{}}\\
&&&=\eps^\me\ol{\rot\pR{q-1}{}{}{\circ}(\Omega)}\oplus_\eps\dqnom{}=\eps^\me\ronqnom{}\oplus_\eps\ol{\pdiv\Dqpeom{}}\\
\text{\rm\bf{ (ii)}}&&\lzqom
&=\ol{\rot\pR{q-1}{}{}{\circ}(\Omega)}\oplus_\eps\dhqepsom{}\oplus_\eps\eps^\me\ol{\pdiv\Dqpeom{}}\\
&&&=\eps^\me\ol{\rot\pR{q-1}{}{}{\circ}(\Omega)}\oplus_\eps\eps^\me\dH{q}{}{\eps^\me}(\Omega)\oplus_\eps\ol{\pdiv\Dqpeom{}}
\end{align*}
All closures are taken in $\lzqom$\,.
\end{lem}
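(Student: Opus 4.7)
The plan is to equip $\lzqom$ with the equivalent scalar product $\skp{\eps\,\cdot\,}{\,\cdot\,}_{\lzqom}$ furnished by the admissible transformation $\eps$ and then apply the Hilbert space projection theorem to the various closed subspaces appearing in the statement. Each orthogonal complement will be identified through the weak definitions of $\rot$ and $\pdiv$, combined with the defining density of $\cqun(\om)$ in $\pR{q-1}{}{}{\circ}(\Omega)$ and with the density result \eqref{lemm0}.

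For the first decomposition in \textbf{(i)}, take $\Phi\in\pR{q-1}{}{}{\circ}(\Omega)$, approximated in the corresponding graph norm by test forms from $\cqun(\om)$, and any $H\in\dqnom{}$. Integration by parts yields $\skp{\eps(\eps^\me H)}{\rot\Phi}_{\lzqom}=\skp{H}{\rot\Phi}_{\lzqom}=-\skp{\pdiv H}{\Phi}_{\lzqom}=0$, so $\ol{\rot\pR{q-1}{}{}{\circ}(\Omega)}$ and $\eps^\me\dqnom{}$ are $\eps$-orthogonal. Conversely, any $G\in\lzqom$ with $\skp{\eps G}{\rot\Phi}_{\lzqom}=0$ for all $\Phi\in\cqun(\om)$ satisfies, by the very definition of the weak co-derivative, $\eps G\in\dqnom{}$, i.e.\ $G\in\eps^\me\dqnom{}$. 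The projection theorem then produces the first identity. The second identity of \textbf{(i)} is obtained in the same way from the characterization of $\ronqnom{}$ recorded right after \eqref{spurnullr}: for $E\in\ronqnom{}$ and $H\in\dqpeom{}$ we have $\skp{\rot E}{H}_{\lzqpeom}+\skp{E}{\pdiv H}_{\lzqom}=0$, and since $\rot E=0$ this yields the $\eps$-orthogonality; the complement is then identified by testing against forms from $\pdiv\cqun(\om)$. The two remaining decompositions of \textbf{(i)} follow immediately by applying what has been proved to the equally admissible transformation $\eps^\me$.

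For \textbf{(ii)}, observe that $\ol{\rot\pR{q-1}{}{}{\circ}(\Omega)}\subset\ronqnom{}$ and $\ol{\pdiv\dqpeom{}}\subset\dqnom{}$ on account of $\rot\rot=0$ and $\pdiv\pdiv=0$. Starting from the first decomposition of \textbf{(i)}, it only remains to refine the $\eps^\me\dqnom{}$ summand. Applying the projection theorem inside the Hilbert space $\eps^\me\dqnom{}$ to the closed subspace $\eps^\me\ol{\pdiv\dqpeom{}}$, its $\eps$-orthogonal complement within $\eps^\me\dqnom{}$ is precisely $\ronqnom{}\cap\eps^\me\dqnom{}=\dhqepsom{}$, the space of $\eps$-harmonic Dirichlet fields. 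This produces the first three-term decomposition of \textbf{(ii)}, and the second follows by replacing $\eps$ with $\eps^\me$.

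The principal technical obstacle is the sign- and domain-correct identification of these orthogonal complements through the weak integration-by-parts identities, in particular the passage from the condition $\skp{\eps G}{\rot\Phi}_{\lzqom}=0$ for all test forms $\Phi$ to the statement $\eps G\in\dqnom{}$, and the analogous step for $\ronqnom{}$. The density assertions of \eqref{lemm0} together with the pairing characterization of $\ronqnom{}$ do the real work here; once these identifications are in place, the decompositions themselves are immediate consequences of the projection theorem.
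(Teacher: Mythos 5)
Your argument is correct and is essentially the route the paper itself takes: the paper justifies the lemma in one sentence by the projection theorem, the $\lzqom$-orthogonality of $\rot\pR{q-1}{}{}{\circ}(\Omega)$ and $\dqnom{}$ (resp.\ $\pdiv\Dqpeom{}$ and $\ronqnom{}$), and the inclusions $\rot\pR{q-1}{}{}{\circ}(\Omega)\subset\ronqnom{}$, $\pdiv\Dqpeom{}\subset\dqnom{}$, referring to the literature for details, and your proposal simply fills in those details (identification of the $\eps$-orthogonal complements via the weak definitions of $\rot$ and $\pdiv$ and the pairing characterization of $\ronqnom{}$) in the intended way. The only cosmetic slips are that the test forms should be $(q-1)$- resp.\ $(q+1)$-forms where you write $\cqun(\om)$; the substance is fine.
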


Here we introduced the `{\it (harmonic) Dirichlet forms}' by
$$\dhqepsom{}:=\xrnqn\vono\cap\eps^\me\xdqn\vono$$
and we denote them by $\dhqom$\,, if $\eps=\id$\,.
An easy application of the latter lemma shows that the orthogonal projection
$$\pi\,:\,\dH{q}{}{\nu}(\Omega)\To\dhqepsom{}$$
on $\eps^\me\xdqn\vono$ along $\rot\xrnqm\vono$ is well defined, linear, continuous and injective.
Therefore, by symmetry we obtain $\dim\dH{q}{}{\nu}(\Omega)=\dim\dhqepsom{}$ and hence this dimension is
independent of transformations, i.e.
$$\dim\dhqepsom{}=\dim\dhqom\qquad.$$

\subsection{Compact embedding}\mylabel{appcomp}

\begin{defini}\mylabel{maxkompakt}
$\Omega$ possesses the
\begin{itemize}
\item[\bf(i)]  `{\sf Maxwell compactness property}' ({\sf MCP}), if and only if the embeddings
$$\Ronqom{}\cap\Dqom{}\hookrightarrow\lzqom\qquad;$$
\item[\bf(ii)] `{\sf Maxwell local compactness property}' ({\sf MLCP}), if and only if the embeddings
$$\Ronqom{}\cap\Dqom{}\hookrightarrow\Lzqloc(\omq)$$
\end{itemize}
are compact for all $q$\,. 
\end{defini}

The {\sf MCP} and {\sf MLCP} are a properties of the boundary. We will shortly present some results.

There exists a large amount of literature about the {\sf MCP}, which can only hold for sub-manifolds $\om$ with compact closure,
which may be assumed by now.
The first idea was to use Gaffney's inequality, i.e. to estimate the $\qh{1}{q}{}{}\vono$-norm
by the $\xrq\vono\cap\xdq\vono$-norm, and then to apply Rellich's selection theorem.
To do this one needs smooth boundaries, which for instance may be seen in \cite[Theorem 8.6]{leisbuch}.
If $q=0$ we even have
$$\xr{0}{}{\circ}\vono\cap\xd{0}{}{}\vono=\xr{0}{}{\circ}\vono=\qh{1}{0}{}{\circ}\vono\qquad.$$
In 1972 Weck \cite{weckhabil} resp. \cite{weckmax} presented for the first time a proof of the 
{\sf MCP} for manifolds with nonsmooth boundaries (`cone-property'). 
Further proofs of the {\sf MCP} were given by Picard \cite{comimb} (`Lipschitz-domains') and in the classical
case by Weber \cite{weber} (another `cone-property') and Witsch \cite{witsch} (`$p$-cusp-property').
A proof of the {\sf MCP} in the classical case for bounded domains handling the largest known class of boundaries was given by
Picard, Weck and Witsch in \cite{xmas}. They combine the techniques from \cite{weckmax}, \cite{comimb} and \cite{witsch}.

We note that the {\sf MCP} is independent of transformations. More precisely:
Let $\eps_q$ be admissible transformations for all $q$\,.
Then $\Omega$ possesses the {\sf MCP}, if and only if the embeddings
$$\xrnq\vono\cap\eps_q^\me\xdq\vono\hookrightarrow\lzqom$$
are compact for all $q$\,.
Moreover, the {\sf MCP} yields the finite dimension of the space of Dirichlet forms $\dhqom{}$\,.
In fact, the dimension is determined by topological properties of $\Omega$\,,
i.e. $\dim\dhqom=\beta_{N-q}$\,, the $(N-q)$-th Betti number of $\Omega$\,. 
Furthermore, for admissible transformations the {\sf MCP} implies (by an indirect argument) 
the existence of a positive constant $c$\,, such that the estimate
\beq\norm{E}_{\lzqom}\leq c\big(\norm{\rot E}_{\lzqpeom}+\norm{\pdiv\eps E}_{\qLzom{q-1}{}}\big)\mylabel{abschaetzrotunddiv}\eeq
holds uniformly with respect to $E\in\xrnq\vono\cap\eps^\me\xdq\vono\cap\dhqepsom{}^\bot$\,.
Here we denote the orthogonality with respect to the $\skp{\,\cdot\,}{\,\cdot\,}_{\lzqom}$-scalar product by $\bot$\,.
This estimate implies the closedness of $\rot\xrnq\vono$ resp. $\pdiv\xdq\vono$ in $\qLzom{q+1}{}$ resp. $\qLzom{q-1}{}$\,.
We even have
\begin{align}
\ol{\rot\xrnq\vono}=\rot\xrnq\vono&=\rot\big(\xrnq\vono\cap\eps^\me\xdqn\vono\cap\dhqepsom{}^{\bot_\nu}\big)\qquad,
\mylabel{rotabg}\\
\ol{\pdiv\xdq\vono}=\pdiv\xdq\vono&=\pdiv\big(\xdq\vono\cap\eps^\me\xrnqn\vono\cap\dH{q}{}{\eps^\me}(\Omega)^{\bot_\nu}\big)\qquad,
\mylabel{divabg}
\end{align}
which was shown in \cite{potential} in the case $\eps=\nu=\id$\,. Here we denote the orthogonality with respect to
the $\skp{\nu\,\cdot\,}{\,\cdot\,}_{\lzqom}$-scalar product by $\bot_\nu$ and put $\bot:=\bot_{\id}$\,.

For an exterior domain $\om$ with the {\sf MLCP} we have similar results.
We will present them in the following.

\begin{lem}\mylabel{maxkompaktlokallem}
The following assertions are equivalent:
\begin{itemize}
\item[\rm\bf (i)] $\om$ possesses the {\sf MLCP}.
\item[\rm\bf (ii)] $\om\cap U_t$ possesses the {\sf MCP} for all $r\geq r_0$ with $\rN\ohne\om\subset U_{r_{0}}$\,.
\item[\rm\bf (iii)] The embeddings
$$\Ronqsom\cap\Dqsom\hookrightarrow\Lzqtom$$
are compact for all $t,s\in\rz$ with $t<s$ and all $q$\,.
\item[\rm\bf (iv)] For all $t,s\in\rz$ with $t<s$\,, all $q$ and all admissible transformations $\eps_q$ the embeddings
$$\Ronqsom\cap\eps_q^\me\Dqsom\hookrightarrow\Lzqtom$$
are compact.
\end{itemize}
\end{lem}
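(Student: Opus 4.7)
The plan is to close the circle (iv) $\Rightarrow$ (iii) $\Rightarrow$ (i) $\Rightarrow$ (ii) $\Rightarrow$ (iv), making all four statements visibly equivalent. Two of the implications are essentially tautological: taking $\eps_q=\id$ in (iv) yields (iii); and specialising (iii) to $s=0$ with any $t<0$ yields (i), since on every compact $K\subset\omq$ the weighted norm $\norm{\,\cdot\,}_{\Lzqtom}$ restricted to $K$ is equivalent to the plain $L^2(K)$-norm, so compact embedding into $\Lzqtom$ entails compactness into $\Lzqloc(\omq)$.

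For (i) $\Rightarrow$ (ii), fix $t\geq r_0$ so that $\rN\ohne\om\subset U_t$. Take a bounded sequence in the Maxwell graph space on $\om\cap U_t$ with vanishing tangential trace, approximate each member by $\cu$-forms compactly supported in the open set $\om\cap U_t$, and extend by zero to all of $\om$. The extensions lie in $\cqun(\om)$, so their limits belong to $\Ronqom{}\cap\Dqom{}$ and coincide with the originals on $\om\cap U_t$. The {\sf MLCP} supplied by (i) then furnishes a subsequence convergent in $L^2$ on every compact subset of $\omq$, in particular on $\omq\cap\ol{U_t}$, which is exactly the {\sf MCP} on $\om\cap U_t$ claimed in (ii).

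For the pivotal implication (ii) $\Rightarrow$ (iv), fix admissible $\eps_q$ and $t<s$. Given a bounded sequence $(E_n)\subset\Ronqsom\cap\eps_q^\me\Dqsom$, choose a smooth cut-off $\eta_T$ equal to $1$ on $\ol{U_T}$ and supported in $U_{2T}$. The product rules
\begin{align*}
\rot(\eta_T E_n)&=\eta_T\rot E_n+d\eta_T\wedge E_n,\\
\pdiv(\eta_T\eps_q E_n)&=\eta_T\pdiv\eps_q E_n-*(d\eta_T\wedge*\eps_q E_n)
\end{align*}
show that $\eta_T E_n$ is bounded in the unweighted Maxwell graph space for the transformation $\eps_q$ on the bounded truncated domain $\om\cap U_{3T}$, which inherits the {\sf MCP} from (ii). Since {\sf MCP} on bounded domains is transformation-independent, as recalled immediately before the lemma, a subsequence converges in $L^2(\om\cap U_{3T})$. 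A standard diagonal procedure over $T\in\nz$ extracts one subsequence $(E_{n_k})$ convergent in $L^2(\om\cap U_T)$ for every $T\in\nz$.

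To upgrade this to convergence in $\Lzqtom$, the weight gap $t<s$ gives on $A_T:=\rN\ohne\ol{U_T}$ the pointwise bound $\rho^{2t}\leq(1+T^2)^{t-s}\rho^{2s}$, hence
$$\norm{E_{n_k}-E_{n_j}}_{\Lzq{t}(A_T)}\leq(1+T^2)^{(t-s)/2}\norm{E_{n_k}-E_{n_j}}_{\Lzqsom}\To 0$$
uniformly in $k,j$ as $T\to\infty$, by the boundedness of $(E_n)$ in $\Lzqsom$. Splitting the $\Lzqtom$-norm at radius $T$ and combining this tail estimate with the $L^2$-Cauchy behaviour on $\om\cap U_T$ produces a Cauchy subsequence in $\Lzqtom$, closing (iv). The main obstacle is precisely this last implication: one must simultaneously absorb the admissible $\eps_q$ (via the transformation-independence of {\sf MCP} on bounded pieces) and the polynomially weighted tail (via the gap $t<s$), whereas the remaining three implications reduce to specialisation or extension by zero.
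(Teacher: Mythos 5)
The cycle (iv)$\Rightarrow$(iii)$\Rightarrow$(i) together with your pivotal step (ii)$\Rightarrow$(iv) is sound: specialising to $\eps_q=\id$ and to $s=0$, $t<0$ is legitimate (the weight is bounded above and below on compacta of $\omq$), and the cut-off/diagonal argument for (ii)$\Rightarrow$(iv) — transformation-independence of the {\sf MCP} on the truncated pieces combined with the tail bound $\rho^{2t}\leq(1+T^2)^{t-s}\rho^{2s}$ on $A_T$ — is exactly the right mechanism. (The paper states this lemma without printing a proof, so the only standard here is correctness.)

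Your step (i)$\Rightarrow$(ii) has a genuine gap. An element $E$ of $\xrnq(\om\cap U_t)\cap\xdq(\om\cap U_t)$ is, by definition of the first factor, a limit of forms from $\cqun(\om\cap U_t)$ in the $\rot$-graph norm \emph{only}; it cannot in general be approximated by compactly supported smooth forms simultaneously in the $\rot$- and $\pdiv$-graph norms, since such an approximation would force the normal trace $\xgn E$ to vanish on the bounding sphere $\{|x|=t\}$ as well, which fails for generic $E$. Consequently the extension of $E$ by zero to $\om$ lies in $\Ronqom{}$ but \emph{not} in $\Dqom{}$: testing against $\Phi\in\cqun(\om)$ and integrating by parts on $\om\cap U_t$ leaves a surface contribution on $\{|x|=t\}$ coming from the nonvanishing normal trace, so $\pdiv$ of the extension is a distribution partly supported on that sphere rather than an $\lz$-form. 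Your bounded sequence therefore never lands in the space on which the {\sf MLCP} acts. The repair is a splitting $E_n=\eta E_n+(1-\eta)E_n$ with $\eta=1$ near $\rN\ohne\om$ and $\supp\eta\subset U_{t'}$ for some $t'\in(r_0,t)$: the inner piece is supported away from $\{|x|=t\}$, so its zero-extension genuinely belongs to $\Ronqom{}\cap\Dqom{}$ and (i) applies; the outer piece lives in the smooth annulus $U_t\ohne\ol{U_{r_0}}$ with full homogeneous tangential trace (on the inner sphere by support, on the outer sphere by inheritance from $E_n$), and there the {\sf MCP} has to be imported as an independent known ingredient — via Gaffney's inequality and Rellich's selection theorem, or equivalently via Theorem \ref{regularitybd} with $m=0$ followed by Rellich. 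Without this extra input the implication (i)$\Rightarrow$(ii) does not close; with it, your proof of the lemma is complete.
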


From \cite{potential} and \cite{hodge} we obtain $\dim\dhqom=\dim\dhqmeom=\beta_{N-q}<\infty$\,.
Here we introduced the `{\it (weighted harmonic) Dirichlet forms}'
$$\dhqepsom{t}:=\ronqnom{t}\cap\eps^\me\dqnom{t}$$
and again neglect the transformation or the weight in the notation for $\eps=\id$ or $t=0$\,.

Now let $\eps$ be an admissible transformation, which is $\tau$-$\pc{1}$-admissible of second kind 
in $A_{r}$ for an arbitrary $r\geq r_{0}$  
with some order of decay $\tau>0$ (and $r_{0}$ from Lemma \ref{maxkompaktlokallem}).

We need a fundamental Poincare-like estimate:

\begin{lem}\mylabel{rotdivunglepsomega}
There exists some constant $c>0$ and some compact set $K\subset\rN$\,,
such that
$$\norm{E}_{\Lzqom{-1}}\leq c\big(\norm{\rot E}_{\lzqpeom}+\norm{\pdiv\eps E}_{\qLzom{q-1}{}}+\norm{E}_{\lzq(\Omega\cap K)}\big)$$
holds true for all $E\in\rqom{-1}\cap\eps^\me\dqom{-1}$\,.
\end{lem}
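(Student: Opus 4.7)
The plan is to argue by contradiction, combining the Maxwell local compactness property from Lemma \ref{maxkompaktlokallem} with a unique continuation rigidity step and a whole-space cut-off tail estimate. Fix $K := \overline{\om}\cap\overline{U_{r_0+1}}$, with $r_0$ from Lemma \ref{maxkompaktlokallem} chosen so that $\rN\setminus\om\subset U_{r_0}$. If the estimate fails with this particular $K$, then for each $n$ we produce $E_n\in\rqom{-1}\cap\eps^{-1}\dqom{-1}$ with $\|E_n\|_{\Lzqom{-1}}=1$ and
\begin{equation*}
\|\rot E_n\|_{\lzqpeom}+\|\pdiv\eps E_n\|_{\qLzom{q-1}{}}+\|E_n\|_{\lzq(\om\cap K)}<1/n.
\end{equation*}

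Lemma \ref{maxkompaktlokallem}\,(iv) applied with $s=-1$ and $t=-2$ (after the standard reduction to homogeneous tangential trace via \eqref{gthdefaussen}) extracts a subsequence, still labelled $E_n$, converging strongly in $\mathbf{L}^{2,q}_{-2}(\om)$ to some $E$. The strong $\lz$-convergence $\rot E_n\to 0$ and $\pdiv\eps E_n\to 0$ shows $E\in\dhqepsom{-1}$, and $\|E_n\|_{\lz(\om\cap K)}\to 0$ forces $E\equiv 0$ on $\om\cap K$, in particular on the open annulus $\om\cap(U_{r_0+1}\setminus\overline{U_{r_0}})\subset A_{r_0}$. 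On this exterior region $\eps$ has $\pc{1}$-coefficients, so Lemma \ref{regganzraumeps} together with the identity $\rot\pdiv+\pdiv\rot=\Delta$ makes each Cartesian component of $E$ satisfy a linear second-order elliptic system with $\pc{1}$-coefficients; Aronszajn's unique continuation together with the connectedness of $\om$ then propagates the vanishing to yield $E\equiv 0$ on $\om$.

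The main obstacle is the final step: compactness only gives $E_n\to 0$ in $\mathbf{L}^{2,q}_{-2}(\om)$, whereas the normalization lives in the stronger $\Lzqom{-1}$. On any truncation $\om\cap U_{2R}$ the two weights $\rho^{-1}$ and $\rho^{-2}$ are comparable, so $\|E_n\|_{\mathbf{L}^{2,q}_{-1}(\om\cap U_{2R})}\to 0$ as $n\to\infty$ for each fixed $R$. For the tail, I pick a smooth cutoff $\eta_R$ vanishing on $U_R$ and equal to $1$ outside $U_{2R}$; for $R>r_0$ the form $\eta_R E_n$ extends by zero to a compactly supported form on $\rN$. Combining Lemma \ref{regganzraumeps} (which by \eqref{dnrotdiv} controls $\|\nabla(\eta_R E_n)\|_{\lz(\rN)}$ by the $\rot$- and $\pdiv$-norms) with the classical Hardy inequality $\|u\|_{\mathbf{L}^{2,q}_{-1}(\rN)}\leq c\|\nabla u\|_{\lz(\rN)}$ for compactly supported forms (valid for $N\geq 3$; the case $N=2$ requires a standard auxiliary argument) gives
\begin{equation*}
\|\eta_R E_n\|_{\mathbf{L}^{2,q}_{-1}(\rN)}\leq c\bigl(\|\rot E_n\|_{\lz}+\|\pdiv\eps E_n\|_{\lz}+R^{-1}\|E_n\|_{\lz(Z_{R,2R})}\bigr),
\end{equation*}
up to $\epsd$-perturbation contributions that decay for large $R$. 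The commutator factor is dominated by $cR\|E_n\|_{\mathbf{L}^{2,q}_{-2}(Z_{R,2R})}\to 0$ as $n\to\infty$ for fixed $R$, and the first two terms vanish by construction, so the right-hand side tends to $0$. Together with the interior estimate this forces $\|E_n\|_{\Lzqom{-1}}\to 0$, the required contradiction.
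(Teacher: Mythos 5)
Your approach diverges from the paper's and contains several genuine gaps. First, the compactness step cannot get started: Lemma \ref{maxkompaktlokallem} concerns $\Ronqsom\cap\eps_q^\me\Dqsom$, i.e.\ forms with \emph{homogeneous} tangential trace, whereas the lemma is asserted for arbitrary $E\in\rqom{-1}\cap\eps^\me\dqom{-1}$ with no boundary condition whatsoever. Your ``standard reduction to homogeneous tangential trace'' via $\xchgt\xgt E$ requires $\xgt E\in\qh{1/2}{q}{}{}(\dom)$ at least, but for a general element of $\rqom{-1}$ the trace is only a functional in $\qh{-1/2}{q}{}{}(\dom)$, so no extension in $\Hqmsom$ exists and the reduction fails. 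Second, the unique continuation step is unjustified: near $\dom$ the transformation $\eps$ is merely admissible, i.e.\ has only $\text{\rm L}^\infty$-coefficients (the $\pc{1}$-regularity and the decay are assumed only in $A_r$), so there is no elliptic system with $\pc{1}$-coefficients on $\om\cap U_{r_{0}}$ to which an Aronszajn-type theorem could be applied, and the vanishing on the annulus cannot be propagated across that region. You do need this step, because your tail estimate leaves the term $\norm{E_n}_{\lzq(Z_{R,2R})}$ for large $R$, which lives outside $K$. Third, the tail estimate applies Lemma \ref{regganzraumeps} to $\eta_RE_n$, which presupposes $\eta_RE_n\in\pRq{}(\rN)\cap\eps^\me\pDq{}(\rN)$ in the \emph{unweighted} sense; but $E_n$ lies only in the weight-$(-1)$ spaces, which are strictly larger, so this membership is precisely what has to be proved. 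Finally, your argument consumes the {\sf MLCP}, whereas the paper explicitly remarks that this lemma does not need it --- indeed it is this lemma \emph{together with} the {\sf MLCP} that later yields Corollary \ref{PoincareAbsch}, so building the {\sf MLCP} into the proof weakens the result.

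For comparison, the paper's proof is direct and avoids all of this: a cut-off vanishing near $\dom$ reduces everything to $\Omega=\rN$ (the commutator terms and the compactly supported remainder are absorbed into $\norm{E}_{\lzq(\Omega\cap K)}$, so no boundary condition is needed); Lemma \ref{wholespaceregularity}\,(ii) supplies the weighted first-order estimate \eqref{EinsNormabsch} for all $t\geq1$; the $\eps=\id$ version of the desired inequality is quoted from \cite[Lemma 5]{potential}; and, writing $\id=\eps-\epsd$, the perturbation term $\norm{E}_{\qh{1}{q}{-1-\tau}{}(\rN)}$ is absorbed into the left-hand side using \eqref{EinsNormabsch} with $t=1+\tau$ together with the splitting \eqref{trick}, at the price of enlarging $K$. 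The two ingredients you would have to supply to repair your tail estimate --- a Hardy-type whole-space inequality and a regularity estimate valid at negative weights --- are exactly these, at which point the contradiction, compactness and unique-continuation scaffolding becomes superfluous.
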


\begin{proof}
By a usual cutting technique we may restrict our considerations to the special case $\Omega=\rN$
and $\eps$ is $\tau$-$\pc{1}$-admissible of second kind in $\rN$\,. 
Picking some $E$ from $\prq{-1}(\rN)\cap\eps^\me\pdq{-1}(\rN)$ by Lemma \ref{wholespaceregularity} (ii) we get
$E\in\qh{1}{q}{-t}{}(\rN)$ for all $t\geq1$ and the estimate (with $c$ depending on $t$ but not on $E$)
\beq\norm{E}_{\qh{1}{q}{-t}{}(\rN)}\leq c\big(\norm{E}_{\Lzq{-t}(\rN)}+\norm{\rot E}_{\Lzqpe{1-t}(\rN)}
+\norm{\pdiv\eps E}_{\qLz{q-1}{1-t}(\rN)}\big)\qquad.\mylabel{EinsNormabsch}\eeq
From \big[\cite{potential}, Lemma 5\big] we receive a compact set $K$\,, such that
$$\norm{E}_{\Lzq{-1}(\rN)}\leq c\big(\norm{\rot E}_{\lzqpe(\rN)}+\norm{\pdiv E}_{\qLz{q-1}{}(\rN)}
+\norm{E}_{\lzq(K)}\big)\qquad.$$
Then \eqref{EinsNormabsch} (for $t=1$) and the latter estimate yield with $\id=\eps-\epsd$
$$\norm{E}_{\qh{1}{q}{-1}{}(\rN)}\leq c\big(\norm{\rot E}_{\lzqpe(\rN)}+\norm{\pdiv\eps E}_{\qLz{q-1}{}(\rN)}
+\norm{E}_{\lzq(K)}+\norm{E}_{\qh{1}{q}{-1-\tau}{}(\rN)}\big)\quad.$$
Again utilizing \eqref{EinsNormabsch} (for $t=1+\tau$) the term $\norm{E}_{\qh{1}{q}{-1-\tau}{}(\rN)}$
may be replaced by $\norm{E}_{\Lzq{-1-\tau}(\rN)}$\,. 
Since $\tau>0$ and using the trick from \eqref{trick} this one can be swallowed by the left hand side, 
which might produce some other compact set $\tilde{K}\supset K$\,.
\end{proof}

We note that we did not need the {\sf MLCP} for the proof of this lemma.
But this lemma and the {\sf MLCP} yield directly (by an indirect argument)

\begin{cor}\mylabel{PoincareAbsch}
$\dhqepsom{-1}$ is finite dimensional and there exists some positive constant $c$\,, such that
$$\norm{E}_{\Lzqom{-1}}\leq c\big(\norm{\rot E}_{\lzqpeom}+\norm{\pdiv\eps E}_{\qLzom{q-1}{}}\big)$$
holds for all $E\in\ronqom{-1}\cap\eps^\me\dqom{-1}\cap\dhqepsom{-1}^{\bot_{-1,\nu}}$\,.
Here we denote by $\bot_{-1,\nu}$ the orthogonality with respect to the
$\skpom{\nu\rho^\me\,\cdot\,}{\rho^\me\,\cdot\,}$-scalar product.
\end{cor}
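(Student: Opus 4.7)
The plan is to deduce both parts of Corollary \ref{PoincareAbsch} from Lemma \ref{rotdivunglepsomega} together with the Maxwell local compactness property (Lemma \ref{maxkompaktlokallem}), by two standard indirect compactness arguments. The guiding observation is that Lemma \ref{rotdivunglepsomega} trades the global weighted norm $\|\cdot\|_{\Lzqom{-1}}$ for the local piece $\|\cdot\|_{\lzq(\Omega \cap K)}$ plus the $\rot$- and $\pdiv\eps$-terms, while the MLCP supplies compactness precisely for such local pieces.

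For the finite dimensionality of $\dhqepsom{-1}$ I would argue by contradiction: assume it is infinite dimensional and pick an infinite sequence $(E_n) \subset \dhqepsom{-1}$ orthonormal with respect to $\skpom{\nu\rho^\me\,\cdot\,}{\rho^\me\,\cdot\,}$. Admissibility of $\nu$ makes this norm equivalent to $\|\cdot\|_{\Lzqom{-1}}$, so $(E_n)$ is bounded in $\Lzqom{-1}$, and $\rot E_n = 0$, $\pdiv\eps E_n = 0$ by definition of the Dirichlet forms. Lemma \ref{maxkompaktlokallem}(iv), applied with $s = -1$ and any $t < -1$, then extracts a subsequence convergent in $\Lzqom{t}$ and hence, since $\rho$ is bounded above and below on the compact set $K$ of Lemma \ref{rotdivunglepsomega}, Cauchy in $\lzq(\Omega \cap K)$. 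Feeding the differences $E_n - E_m$ into Lemma \ref{rotdivunglepsomega} upgrades this to Cauchy convergence in $\Lzqom{-1}$, which contradicts orthonormality.

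The Poincaré-type estimate is handled by the same mechanism. Negating it produces a sequence $(E_n) \subset \ronqom{-1} \cap \eps^\me\dqom{-1} \cap \dhqepsom{-1}^{\bot_{-1,\nu}}$ with $\|E_n\|_{\Lzqom{-1}} = 1$ while $\|\rot E_n\|_{\lzqpeom} + \|\pdiv\eps E_n\|_{\qLzom{q-1}{}} \to 0$. As in the previous step Lemma \ref{maxkompaktlokallem} plus Lemma \ref{rotdivunglepsomega} yield a subsequence convergent in $\Lzqom{-1}$ to some $E$. Graph closedness of $\rot$ and $\pdiv\eps$ forces $\rot E = 0$ and $\pdiv\eps E = 0$, and the closedness of $\ronqom{-1}$ inside $\rqom{-1}$ preserves the homogeneous tangential trace in the limit, so $E \in \dhqepsom{-1}$. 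Continuity of the weighted inner product lets the orthogonality $E_n \perp_{-1,\nu} \dhqepsom{-1}$ pass to the limit, so $E = 0$, contradicting $\|E\|_{\Lzqom{-1}} = 1$.

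The main obstacle is the familiar issue for Poincaré-type estimates on unbounded domains: the MLCP yields compactness only into the strictly weaker weighted space $\Lzqom{t}$ with $t < -1$, so a direct extraction cannot produce convergence in the target $\Lzqom{-1}$. Lemma \ref{rotdivunglepsomega} is precisely the tool that closes this gap by localizing the $\Lzqom{-1}$-norm onto a compact set; once it is in hand, both claims follow by routine manipulation of graph norms, closed subspaces, and continuity of the weighted inner product.
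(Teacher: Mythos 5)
Your proposal is correct and matches the paper's intent exactly: the paper proves this corollary only by the remark that Lemma \ref{rotdivunglepsomega} together with the {\sf MLCP} ``yield directly (by an indirect argument)'' the claim, and your two contradiction arguments are precisely the standard way to carry that out. The key point --- that Lemma \ref{rotdivunglepsomega} localizes the $\Lzqom{-1}$-norm onto a compact set so that the merely local compactness of the {\sf MLCP} suffices --- is exactly the mechanism the authors have in mind.
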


\begin{cor}\mylabel{abgeschlRaeume}
With closures taken in $\qLzom{q\pm1}{}$ we have
\begin{align*}
\text{\rm\bf{(i)}}\quad&&\ol{\rot\Ronqom{}}
&=\rot\ronqom{-1}=\rot\big(\ronqom{-1}\cap\eps^\me\dqnom{-1}\cap\dhqepsom{-1}^{\bot_{-1,\nu}}\big)\quad,\\
\text{\rm\bf{(ii)}}\quad&&\ol{\pdiv\Dqom{}}
&=\pdiv\dqom{-1}=\pdiv\big(\dqom{-1}\cap\eps^\me\ronqnom{-1}\cap\dH{q}{-1}{\eps^\me}(\Omega)^{\bot_{-1,\nu}}\big)\quad.
\end{align*}
\end{cor}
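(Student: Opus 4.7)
Part (ii) reduces to part (i) by the Hodge star operator $\ast$, which sends $q$-forms to $(N-q)$-forms, interchanges $\rot$ with $\pm\pdiv$ (and thus $\Ronqom{}$ with $\Dqom{}$-type spaces at shifted rank), converts $\eps$ into an admissible transformation on $(N-q)$-forms, and carries $\dhqepsom{-1}$ to the correspondingly defined harmonic space while matching $\bot_{-1,\nu}$ to the orthogonality in (ii). I therefore concentrate on (i) and abbreviate
$$X:=\ronqom{-1}\cap\eps^\me\dqnom{-1}\cap\dhqepsom{-1}^{\bot_{-1,\nu}}.$$
Because every $E\in X$ satisfies $\pdiv\eps E=0$, Corollary~\ref{PoincareAbsch} supplies the Poincar\'e-type estimate $\norm{E}_{\Lzqom{-1}}\le c\norm{\rot E}_{\lzqpeom}$, so $\rot|_X$ is injective with closed range and $\rot X$ is a closed subspace of $\qLzom{q+1}{}$.

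With this in hand, (i) follows from the chain of inclusions $\rot X\subset\rot\ronqom{-1}\subset\ol{\rot\Ronqom{}}\subset\rot X$. The first is trivial since $X\subset\ronqom{-1}$. For the second I would invoke density: by definition $\ronqom{-1}$ is the $\rqom{-1}$-graph-norm closure of $\cqun(\Omega)$, and $\cqun(\Omega)\subset\Ronqom{}$. Hence each $E\in\ronqom{-1}$ admits a sequence $E_n\in\Ronqom{}$ with $\rot E_n\to\rot E$ in $\qLzom{q+1}{}$, giving $\rot E\in\ol{\rot\Ronqom{}}$.

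The third inclusion $\ol{\rot\Ronqom{}}\subset\rot X$ is the crux. Since $\rot X$ is closed, it suffices to prove $\rot\Ronqom{}\subset\rot X$, i.e.\ for each $E\in\Ronqom{}$ to produce $\tilde E\in X$ with $\rot\tilde E=\rot E$. Concretely I subtract a form $F\in\ronqnom{-1}$ from $E$ (which preserves $\rot E$ since $\rot F=0$) so that $\tilde E:=E-F$ is both $\eps$-co-closed and $\bot_{-1,\nu}$-orthogonal to $\dhqepsom{-1}$. The orthogonality condition is cheap: $\dhqepsom{-1}$ is finite-dimensional (Corollary~\ref{PoincareAbsch}) and contained in $\ronqnom{-1}$, so its $\bot_{-1,\nu}$-projection may be absorbed into $F$ without altering $\rot E$.

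The \textbf{main obstacle} is the $\pdiv\eps$-free condition, which amounts to solving the Poisson-type problem $\pdiv\eps F=\pdiv\eps E$ within $\ronqnom{-1}$. I plan to resolve it via a weighted Hodge--Helmholtz decomposition of $\Lzqom{-1}$ in the spirit of Lemma~\ref{lzzerlegungen}, assembled from Lemma~\ref{wholespaceregularity} (for the behaviour of potentials at infinity), Corollary~\ref{PoincareAbsch} (for weighted coercivity), and the {\sf MLCP} of Lemma~\ref{maxkompaktlokallem} (for compactness on bounded pieces). Projecting $E$ onto the $\eps^\me\dqnom{-1}\cap\dhqepsom{-1}^{\bot_{-1,\nu}}$-component of this decomposition produces the required $\tilde E\in X$, closes the chain of inclusions, and proves (i); part (ii) then follows by the Hodge star duality mentioned at the outset.
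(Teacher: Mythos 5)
Your skeleton is the right one and largely coincides with the paper's: the whole statement hinges on Corollary \ref{PoincareAbsch}, which makes $\rot$ restricted to $X=\ronqom{-1}\cap\eps^\me\dqnom{-1}\cap\dhqepsom{-1}^{\bot_{-1,\nu}}$ injective with closed range, and the two easy inclusions $\rot X\subset\rot\ronqom{-1}\subset\ol{\rot\Ronqom{}}$ are exactly as you argue. (One small point you gloss over: closedness of the range needs not only the estimate but also that the limit of the resulting $\Lzqom{-1}$-Cauchy sequence again lies in $X$; this holds because membership in $\ronqom{-1}$, the condition $\pdiv\eps E=0$ and orthogonality to the finite-dimensional $\dhqepsom{-1}$ are all preserved under convergence in $\Lzqom{-1}$ together with $\rot E_n\to G$, and it should be said.)

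The genuine gap is in your treatment of the crux inclusion $\ol{\rot\Ronqom{}}\subset\rot X$. You reduce it to solving $\pdiv\eps F=\pdiv\eps E$ with $F\in\ronqnom{-1}$ and propose to do this by first constructing a weighted Hodge--Helmholtz decomposition of $\Lzqom{-1}$. That decomposition is nowhere established in the paper, and its natural proof requires precisely the closedness of weighted ranges that Corollary \ref{abgeschlRaeume} is meant to deliver, so your plan is at best incomplete and at worst circular. It is also unnecessary: since the given $E$ lies in the \emph{unweighted} space $\Ronqom{}$, you can apply the unweighted decomposition of Lemma \ref{lzzerlegungen}, $\lzqom=\ol{\rot\pR{q-1}{}{}{\circ}(\Omega)}\oplus_\eps\eps^\me\dqnom{}$, which is pure projection-theorem functional analysis valid for exterior domains. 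The first summand is $\rot$-free with vanishing tangential trace, so subtracting it leaves $\rot E$ unchanged and lands you in $\Ronqom{}\cap\eps^\me\Dqnom{}\subset\ronqom{-1}\cap\eps^\me\dqnom{-1}$; your finite-dimensional $\bot_{-1,\nu}$-projection then finishes the job exactly as you describe. This is what the paper does (phrased as a sequence argument rather than a chain of inclusions). Finally, your reduction of (ii) to (i) by the Hodge star is too quick: the star moves the homogeneous boundary condition from the $\rot$-factor to the $\eps^\me\ronqnom{-1}$-constraint, so one does not literally recover (i) but its variant with the boundary condition on the other factor; the paper instead proves (ii) by the analogous argument using the second decomposition in Lemma \ref{lzzerlegungen} and the corresponding Poincar\'e-type estimate.
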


\begin{proof}
The proof is analogous to the one of \cite[Lemma 7]{potential}. Nevertheless, let us briefly
indicate how to prove {\bf (i)}. The other assertion follows similarly.
Let $(E_n)_{n\in\nz}\subset\Ronqom{}$ be some sequence with
$\rot E_n\xrightarrow{n\to\infty}G$ in $\Lzqpeom{}$\,. Using Lemma \ref{lzzerlegungen} we may assume
without loss of generality $E_n\in\Ronqom{}\cap\eps^\me\Dqnom{}$\,.
Moreover, by the projection theorem applied in $\Lzqom{-1}$ we may further assume
$$E_n\in\ronqom{-1}\cap\eps^\me\dqnom{-1}\cap\dhqepsom{-1}^{\bot_{-1,\nu}}\qquad.$$
By Corollary \ref{PoincareAbsch} $(E_n)_{n\in\nz}$ is a $\Lzqom{-1}$-Cauchy sequence
and the limit $E\in\Lzqom{-1}$ even is an element of
$\ronqom{-1}\cap\eps^\me\dqnom{-1}\cap\dhqepsom{-1}^{\bot_{-1,\nu}}$\,,
which completes the proof.
\end{proof}

Finally we note an immediate and easy conclusion of Corollary \ref{abgeschlRaeume}, 
i.e. an electro-magneto static solution theory handling homogeneous tangential boundary data.

\begin{theo}\mylabel{loesstatikgewichtNull}
Let $d^q:=\dim\dhqepsom{-1}$ continuous linear functionals $\Phi^\ell_\eps$ 
on the space $\rqom{-1}\cap\eps^\me\dqom{-1}$ with
$$\dhqepsom{-1}\cap\bigcap_{\ell=1}^{d^q}N(\Phi^\ell_\eps)=\{0\}$$
be given. Then with $\Phi_\eps:=(\Phi^1_\eps\,\,,\dots,\Phi^{d^q}_\eps\,\,)$ the linear operator
$$\Abb{\MAX_\eps}{\ronqom{-1}\cap\eps^\me\dqom{-1}}{\ol{\pdiv\Dqom{}}\times\ol{\rot\Ronqom{}}\times\cz^{d^q}}{E}{\big(\pdiv\eps E,\rot E,\Phi_\eps(E)\big)}$$
is a topological isomorphism.
Here $N(\Phi^\ell_\eps)$ denotes the kernel of $\Phi^\ell_\eps$\,.
\end{theo}

\subsection{Linear transformations}

Elementary calculations and estimates show for open subsets $\om$\,, $\tilde{\om}$ with compact closure
of smooth Riemannian manifolds $M$\,, $\tilde{M}$\,:

\begin{lem}\mylabel{taudivrot}
Let $\tau:\tilde{\om}\to\Omega$ be a $\xsc{2}{}$-diffeomorphism respecting orientation
and $\eps$ be a linear transformation. Then the transformation $\eps$ is admissible,
if and only if the transformation
$$\eps_\tau:=(-1)^{q(N-q)}*\tau^**\eps(\tau^*)^\me$$
is admissible. In particular $\id_\tau=(-1)^{q(N-q)}*\tau^**(\tau^*)^\me$ is admissible. Furthermore:
\begin{itemize}
\item[\bf(i)] $E\in\pr{q}{}{}{}(\om)$ resp. $E\in\pr{q}{}{}{\circ}(\om)$\,,
if and only if $\tau^*E\in\pr{q}{}{}{}(\tilde{\om})$ resp. $\tau^*E\in\pr{q}{}{}{\circ}(\tilde{\om})$\,.
Moreover, $\xrot\tau^*E=\tau^*\xrot E$ and there exists a constant $c>0$ independent of $E$\,, such that
$$c^\me\normu{E}{\xrq\vono}\leq\normu{\tau^*E}{\xrq(\tilde{\om})}\leq c\normu{E}{\xrq\vono}\qquad.$$
\item[\bf(ii)] $E\in\eps^{-1}\pdi{q}{}{}{}(\om)$\,, if and only if
$\tau^*E\in\eps_\tau^{-1}\pdi{q}{}{}{}(\tilde{\om})$\,.
Moreover, $\xdiv\eps_\tau\tau^*E=\id_\tau\tau^*\xdiv\eps E$ holds
and there exists some $c>0$ independent of $E$ or $\eps_\tau$\,, such that
$$c^\me\normu{E}{\eps^{-1}\xdq\vono}\leq\normu{\tau^*E}{\eps_\tau^{-1}\xdq(\tilde{\om})}\leq c\normu{E}{\eps^{-1}\xdq\vono}\qquad.$$
\end{itemize}
\end{lem}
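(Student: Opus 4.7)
The plan is to reduce the three assertions to standard facts about the orientation-preserving $\xsc{2}{}$-diffeomorphism $\tau$: (a) the naturality $\rot\tau^*=\tau^*\rot$ of the exterior derivative, (b) the change-of-variables identity $\int_{\om}\omega=\int_{\tilde{\om}}\tau^*\omega$ for $N$-forms, and (c) the bi-Lipschitz bound $c^{-1}\normu{E}{\lzq(\om)}\le\normu{\tau^*E}{\lzq(\tilde{\om})}\le c\normu{E}{\lzq(\om)}$, which is the $m=0$ case of \eqref{pullnorm} and holds because $\tau\in\xsc{2}{}$ on compact closures forces the Jacobians of $\tau$ and $\tau^{-1}$ to be bounded above and below.

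The conceptual heart of the proof is the pivotal identity ${*\eps_\tau\tau^*E=\tau^**\eps E}$, which drops out of the definition of $\eps_\tau$ on applying $*$ and using $**=(-1)^{q(N-q)}$ on $(N-q)$-forms. Combining it at $\eps=\id$ with (b) applied to the $N$-form $E\wedge *\bar H$ and the pointwise formula $\alpha\wedge *\bar\beta=\skp{\alpha}{\beta}_x\,\text{vol}$ for $q$-forms yields
\[
\skp{E}{H}_{\lzq(\om)}=\skp{\tau^*E}{\id_\tau\tau^*H}_{\lzq(\tilde{\om})}.
\]
Directly from the definition one also has $\id_\tau\tau^*\eps E=\eps_\tau\tau^*E$. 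Swapping $E$ and $H$ in the displayed identity and invoking the Hermitian symmetry of the $\lz$-product gives symmetry of $\id_\tau$; substituting $E\leadsto\eps E$ and using that symmetry then yields
\[
\skp{\eps E}{H}_{\lzq(\om)}=\skp{\eps_\tau\tau^*E}{\tau^*H}_{\lzq(\tilde{\om})}.
\]
Admissibility of $\eps_\tau$ follows at once: linearity and $\text{L}^\infty$-coefficients are inherited from $\eps$, from $*$, and from the $\xsc{1}{}$-coefficients of $\tau^*$; symmetry comes from symmetry of $\eps$; uniform positivity comes from uniform positivity of $\eps$ together with (c). The converse implication is handled by interchanging $\tau$ and $\tau^{-1}$.

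For part (i), $\rot\tau^*E=\tau^*\rot E$ holds classically for smooth $E$ and extends to all $E\in\pr{q}{}{}{}(\om)$ by density of $\cqu$-forms in the graph norm together with $\lz$-continuity of $\tau^*$; the two-sided estimate is (c) applied to both $E$ and $\rot E$. Since $\tau^*$ maps $\cqun(\om)$ bijectively onto $\cqun(\tilde{\om})$ while preserving compact supports, passing to the graph-norm closure transfers the claim to $\pr{q}{}{}{\circ}$. For part (ii), writing $\pdiv=\sigma_q*\rot*$ with a sign $\sigma_q=\sigma_q(q,N)$, the pivotal identity and naturality of $\rot$ give
\[
\pdiv\eps_\tau\tau^*E=\sigma_q*\rot*\eps_\tau\tau^*E=\sigma_q*\rot\tau^**\eps E=\sigma_q*\tau^*\rot*\eps E,
\]
and expanding $\pdiv\eps E=\sigma_q*\rot*\eps E$ inside $\id_\tau\tau^*\pdiv\eps E$ (with $\id_\tau$ now acting on $(q-1)$-forms) produces the same expression, so $\pdiv\eps_\tau\tau^*E=\id_\tau\tau^*\pdiv\eps E$. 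The membership equivalence for $\eps^{-1}\pdi{q}{}{}{}(\om)$ versus $\eps_\tau^{-1}\pdi{q}{}{}{}(\tilde{\om})$, and the two-sided graph-norm estimate, then follow from this commutation, from (c) applied to $E$ and to $\pdiv\eps E$, and from the bounded invertibility of $\id_\tau$ on $\lzq$-forms.

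The main obstacle is the sign bookkeeping — keeping $(-1)^{q(N-q)}$, $(-1)^{(q-1)(N-q+1)}$, and $\sigma_q$ consistent across the degrees $q$, $N-q$, $q-1$, $N-q+1$, especially in part (ii) — together with verifying that all density and mollification arguments run uniformly, with constants depending only on the $\xsc{2}{}$-norm of $\tau$ and the $\text{L}^\infty$-bounds on $\eps$. Once the pivotal identity is in place and $\id_\tau$ is certified as an admissible multiplier, the rest of the lemma reduces to routine unwrapping of the definitions.
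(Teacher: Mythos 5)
The paper offers no proof of this lemma, merely asserting that it follows from "elementary calculations and estimates", and your argument is a correct execution of exactly that: the pivotal identity $*\eps_\tau\tau^*=\tau^**\eps$, the change-of-variables formula for $N$-forms, and the naturality of the exterior derivative are the intended ingredients, and your sign bookkeeping in part (ii) checks out. One small remark: the two-sided estimate in (ii) depends only on $\tau$ (via $\tau^*$ and $\id_\tau$) and not on the $\text{L}^\infty$-bounds of $\eps$, which is what the claimed independence of the constant from $\eps_\tau$ requires.
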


\subsection{Fourier transformation for differential forms}

In the special case $M=\rN$ we have some useful operators from the spherical calculus developed in \cite{sphharm}.
For Euclidean coordinates $\{x_1,\dots,x_N\}$ we introduce the pointwise linear operators $R$\,, $T$ on $q$-forms by
$$RE:=x_n\pd x^n\wedge E=r\pd r\wedge E\qqtext{,}TE:=(-1)^{(q-1)N}*R*E$$
and recall the formulas
\begin{equation}RR=0\qqtext{,}TT=0\qqtext{,}RT+TR=r^2\mylabel{RTformula}\end{equation}
as well as for $q$-forms $E$ and $(q+1)$-forms $H$
\beq RE\wedge*H=E\wedge*TH\qqtext{,}TH\wedge*E=H\wedge*RE\qquad,\mylabel{RTskpformula}\eeq
i.e. $\skp{RE}{H}_{q+1}=\skp{E}{TH}_q$ using the pointwise scalar product for differential forms.
Then, for example, the differential $\rot$ resp. $\pdiv$ corresponds to $R$ resp. $T$ in the sense that
\beq C_{\rot\,,\varphi(r)}E=\varphi'(r)r^\me RE\qqtext{resp.}C_{\pdiv,\varphi(r)}E=\varphi'(r)r^\me TE\mylabel{RTcomformula}\eeq
holds for $\varphi\in\pc{1}(\rz)$ and $E\in\pRq{}(\rN)$ resp. $E\in\pDq{}(\rN)$\,.

But there is at least one more connection between these operators.
Let us present the componentwise (with respect to Euclidean coordinates) Fourier transformation on $q$-forms $\calF$\,,
which is a unitary mapping on $\lzq(\rN)$\,. With $\mathcal{X}(x):=x$ and the well known formula         
$$\calF(\pa u)=\ie^{|\alpha|}\mathcal{X}^\alpha\calF(u)$$
for scalar distributions $u$ we get some formulas for $\calF$ operating on $q$-forms $E$\,:
\begin{align}
\calF*E&=*\calF E\mylabel{fouriereins}\\
\calF(\pa E)&=\ie^{|\alpha|}\mathcal{X}^\alpha\calF(E)&,&&\pa\calF(E)&=(-\ie)^{|\alpha|}\calF(\mathcal{X}^\alpha E)\mylabel{fourierzwei}\\
\calF(\rot E)&=\ie R\calF(E)&,&&\rot\calF(E)&=-\ie\calF(RE)\mylabel{fourierdrei}\\
\calF(\pdiv E)&=\ie T\calF(E)&,&&\pdiv\calF(E)&=-\ie\calF(TE)\mylabel{fouriervier}\\
\calF(\Delta E)&=-r^2\calF(E)&,&&\Delta\calF(E)&=-\calF(r^2 E)\mylabel{fourierfuenf}
\end{align}
These formulas may be checked for smooth forms from Schwartz' space and hence remain valid for distributional $q$-forms, i.e.
extend to our weak calculus. We note $\rot\pdiv+\pdiv\rot=\Delta$\,,
where the Laplacian $\Delta$ acts on each Euclidean component of $E$\,.
Utilizing these formulas some Sobolev spaces can be characterized with the aid of the Fourier transformation.
We easily get:
\begin{align}
\Hqm(\rN)&=\setb{E\in\Lzq{}(\rN)}{\calF(E)\in\Lzq{m}(\rN)}\qquad,\qquad m\in\nz\mylabel{fouriersieben}\\
\pRq{}(\rN)&=\setb{E\in\Lzq{}(\rN)}{R\calF(E)\in\Lzqpe{}(\rN)}\mylabel{fourieracht}\\
\pDq{}(\rN)&=\setb{E\in\Lzq{}(\rN)}{T\calF(E)\in\qLz{q-1}{}(\rN)}\mylabel{fourierneun}
\end{align}
In this sense we also may define $\qH{s}{q}{}{}(\rN)$\,, if $s\in\rz$\,.

\subsection{Some technical lemmas}

\begin{lem}\mylabel{spiegel}
Let $r>0$\,, $x':=(x_1,\cdots,x_{N-1})$ and
$$\Abb{\tau}{U_r^+}{U_r^-}{x}{(x',-x_N)}\qquad.$$
Then the mirror operator
$$S_{\rot}:\xrq(U_r^-)\to\xrq(U_r)$$
defined by $\restr{S_{\rot}\,E}{U_r^-}:=E$ and $\restr{S_{\rot}\,E}{U_r^+}:=\tau^*E$
is well defined, linear and continuous.
$S_{\rot}$ commutates with $\rot$ and $\normu{S_{\rot}\,E}{\xlq(U_r)}=\sqrt{2}\normu{E}{\xlq(U_r^-)}$ holds.
\big($\sqrt2/2 S_{\rot}$ even is an isometry.\big)
Moreover, if $\supp E\subset\ol{U_\varrho^-}$ for some $\varrho<r$\,, then $\supp S_{\rot}\,E\subset\ol{U_\varrho}$\,.

The dual mirror operator
\beq S_{\pdiv}:=(-1)^{q(N-q)}*S_{\rot}\,*:\xdq(U_r^-)\to\xdq(U_r)\mylabel{divspiegel}\eeq
has the corresponding properties.
\end{lem}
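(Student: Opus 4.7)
The plan is to prove the statements for $S_{\rot}$ first on the dense subspace $\cqu(\ol{U_r^-})\subset\xrq(U_r^-)$ (density given by \eqref{lemm0}, since $\ol{U_r^-}$ is compact in $\rN$) by a direct Stokes' computation on the two halves $U_r^\pm$, and then to extend to all of $\xrq(U_r^-)$ by continuity. The assertions for $S_{\pdiv}$ will then follow mechanically from \eqref{divspiegel} together with the standard Hodge-star identities interchanging $\rot$ and $\pdiv$ and using $**=(-1)^{q(N-q)}\id$.

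Fix $E\in\cqu(\ol{U_r^-})$, so that $\tau^*E\in\cqu(\ol{U_r^+})$ by smoothness of $\tau$ and $S_{\rot}E\in\xlq(U_r)$ is piecewise well-defined. To verify $S_{\rot}E\in\xrq(U_r)$ together with the commutation relation, I test against an arbitrary $\phi\in\qc{\infty}{q+1}{\circ}(U_r)$ and split
$$\spu{S_{\rot}E}{\pdiv\phi}{\xlq(U_r)}=\spu{E}{\pdiv\phi}{\xlq(U_r^-)}+\spu{\tau^*E}{\pdiv\phi}{\xlq(U_r^+)}\,.$$
Apply the classical Stokes' identity \eqref{TN} on each half. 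Since $\supp\phi$ is compact in $U_r$ it avoids the curved part of $\p U_r^\pm\subset\p U_r$, so only the flat interface $U_r^0$ contributes. Because $\tau|_{U_r^0}=\id_{U_r^0}$ one has $\iota^*(\tau^*E)=\iota^*E$ on $U_r^0$, while the outward normals of $U_r^\pm$ along $U_r^0$ are $\pm e_N$, inducing opposite orientations; the two boundary contributions therefore cancel exactly. Combined with $\rot\tau^*E=\tau^*\rot E$ (Lemma~\ref{taudivrot}\,(i)) this yields
$$\spu{S_{\rot}E}{\pdiv\phi}{\xlq(U_r)}=-\spu{S_{\rot}\rot E}{\phi}{\xlqp(U_r)}\,,$$
which is precisely the weak definition of $\rot S_{\rot}E=S_{\rot}\rot E\in\xlqp(U_r)$.

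Since $\tau$ is a Euclidean isometry, componentwise change of variables gives $\normu{\tau^*F}{\xlq(U_r^+)}=\normu{F}{\xlq(U_r^-)}$ for every $F\in\xlq(U_r^-)$, hence $\normu{S_{\rot}F}{\xlq(U_r)}^2=2\normu{F}{\xlq(U_r^-)}^2$ and the analogous $\sqrt2$-identity for $\rot$. Approximating a general $E\in\xrq(U_r^-)$ by smooth $E_n\in\cqu(\ol{U_r^-})$ in the graph norm, $(S_{\rot}E_n)$ is therefore Cauchy in $\xrq(U_r)$; its limit agrees almost everywhere with $S_{\rot}E$, and the smooth case extends by continuity to yield well-definedness, linearity, continuity, the commutation with $\rot$ and the $\sqrt2$ norm identity (so $(\sqrt2/2)S_{\rot}$ is an isometry). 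The support claim is immediate from $\tau(\ol{U_\varrho^+})=\ol{U_\varrho^-}$. Finally $S_{\pdiv}$ inherits every listed property from $S_{\rot}$ through $S_{\pdiv}=(-1)^{q(N-q)}*S_{\rot}*$, since $*$ commutes with restriction to $U_r^\pm$.

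The only genuinely delicate step is the cancellation of the two boundary contributions on $U_r^0$, which rests simultaneously on $\tau|_{U_r^0}=\id$ (matching tangential traces) and on the reversed induced orientations of $U_r^0$ as seen from $U_r^\pm$; carefully keeping the signs consistent among $\tau^*$, the Hodge star and Stokes' formula is the only actual work, after which the density passage and the $S_{\pdiv}$ transfer are purely formal.
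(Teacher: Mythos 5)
Your proposal is correct and follows essentially the same route as the paper: reduce by density to smooth forms, test $S_{\rot}E$ against compactly supported smooth $(q+1)$-forms, and kill the interface contribution on $U_r^0$ using that $\tau$ restricted to $U_r^0$ is the identity while reversing orientation. The paper merely organizes the computation by pulling the $U_r^+$ integral back to $U_r^-$ before applying Stokes once, rather than applying Stokes on each half and cancelling the two boundary terms, which is the same cancellation in different clothing.
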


\begin{proof}
By density it is enough to show $S_{\rot}\,E\in\xrq(U_r)$ and $\rot S_{\rot}\,E=S_{\rot}\rot E$ 
for some $E\in\xciq(\ol{U_r^-})$\,.
The assertions about the continuity and the support follow directly.
Let $\iota:U_r^0\hookrightarrow\ol{U_r^-}$ denote the natural embedding.
Observing that $\tau$ changes the orientation we get from Stokes' theorem for $\Phi\in\xcinqp(U_r)$
(Clearly we identify $\Phi$ with its restrictions on $U_r^\pm$\,.)
\begin{align*}
\spu{S_{\rot}\,E}{\xdiv\Phi}{\xlq(U_r)}
&=(-1)^{q^2}\int_{U_r^-}E\wedge(\d*\bar{\Phi})+(-1)^{q^2}\int_{U_r^+}(\tau^*E)\wedge(\d*\bar{\Phi})\\
&=(-1)^q\int_{U_r^-}E\wedge\d\big(*\bar{\Phi}-(\tau^{-1})^**\bar{\Phi}\big)\\
&=-\int_{U_r^-}(\d E)\wedge\big(*\bar{\Phi}-(\tau^{-1})^**\bar{\Phi}\big)\\
&\qquad\qquad+\int_{U_r^0}(\iota^*E)\wedge\Big(\big(\iota^*-\iota^*(\tau^{-1})^*\big)*\bar{\Phi}\Big)\qquad.
\intertext{By $\iota-\tau^{-1}\circ\iota=0$ the boundary integral vanishes and we obtain}
\spu{S_{\rot}\,E}{\xdiv\Phi}{\xlq(U_r)}
&=-\int_{U_r^-}(\d E)\wedge*\bar{\Phi}-\int_{U_r^+}(\tau^*\d E)\wedge*\bar{\Phi}\\
&=-\spu{S_{\rot}\rot E}{\Phi}{\xlqp(U_r)}\qquad.
\end{align*}
\end{proof}

\begin{lem}\mylabel{fourier}
Let $N\ge3$ and $\varrho>0$\,. There exists a constant $c>0$\,, such that for all $E\in\xdqn(\rN)$ with $\supp E\subset U_\varrho$
there exists some $H\in\xh{1}{q+1}{}(\rN)$ satisfying
$$\xdiv H=E\qqtext{,}\normu{H}{\xh{1}{q+1}{}(\rN)}\le c\normu{E}{\xlq(\rN)}\qquad.$$
\end{lem}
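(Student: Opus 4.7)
The plan is to solve $\xdiv H = E$ explicitly on the Fourier side. The hypothesis $\xdiv E = 0$ translates via \eqref{fouriervier} to $T\calF(E)=0$, and the algebraic identity $RT+TR = r^2$ from \eqref{RTformula} then suggests the ansatz
\[
\calF(H)(\xi) := -\frac{\ie}{|\xi|^2}\,R\,\calF(E)(\xi),
\]
viewed as a $(q+1)$-form on the Fourier side. Indeed,
\[
T\,\calF(H) = -\frac{\ie}{|\xi|^2}\bigl(|\xi|^2 - RT\bigr)\calF(E) = -\ie\,\calF(E),
\]
so \eqref{fouriervier} gives $\calF(\xdiv H) = \ie\,T\calF(H) = \calF(E)$, i.e.\ $\xdiv H = E$ as required.

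It then remains to prove that $H := \calF^{-1}\bigl(-\ie|\xi|^{-2}R\calF(E)\bigr)$ belongs to $\xh{1}{q+1}{}(\rN)$ with an estimate controlled by $\|E\|_{\xlq(\rN)}$. The key pointwise bound is $|R\phi(\xi)| \le |\xi|\,|\phi(\xi)|$ for any $q$-form $\phi$; on our $E$ this is even an equality, as \eqref{RTskpformula}, \eqref{RTformula} and $T\calF(E)=0$ together show that $|R\calF(E)|^2 = |\xi|^2|\calF(E)|^2$. This yields at once
\[
|\xi_n\,\calF(H)(\xi)| \le |\calF(E)(\xi)|,\qquad n=1,\dots,N,
\]
so Plancherel immediately produces $\|\p_n H\|_{\xlqp(\rN)} \le \|E\|_{\xlq(\rN)}$ for every $n$, taking care of the first-derivative part of the norm.

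For the $L^2$-bound on $H$ itself I would split the integral at $|\xi|=1$. On $\{|\xi|>1\}$ the same pointwise estimate yields $|\calF(H)|\le|\calF(E)|$, whose $L^2$-contribution is harmless. On $\{|\xi|<1\}$, the compact support $\supp E\subset U_\varrho$ and Cauchy--Schwarz force $\calF(E)$ to be continuous with $\|\calF(E)\|_{\infty}\le c\,\varrho^{N/2}\,\|E\|_{\xlq(\rN)}$, so
\[
\int_{|\xi|<1}|\calF(H)(\xi)|^{2}\,d\xi \;\le\; c\,\varrho^{N}\,\|E\|_{\xlq(\rN)}^{2}\int_{|\xi|<1}\frac{d\xi}{|\xi|^{2}}.
\]
The last integral is finite \emph{precisely} when $N\ge 3$, which is exactly the hypothesis of the lemma. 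This low-frequency singularity of the symbol $R/|\xi|^2$ is the only genuine obstacle; high frequencies behave no worse than for a standard first-order antiderivative. In $N=2$ one would have to modify the construction by subtracting a low-frequency correction (the kernel is no longer locally integrable), but the assumption $N\ge 3$ eliminates this issue and the proof is complete.
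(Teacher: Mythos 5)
Your proof is correct and follows essentially the same route as the paper's: the identical Fourier-multiplier ansatz $\calF(H)=-\ie\,|\xi|^{-2}R\calF(E)$, the identity $RT+TR=r^2$ together with $T\calF(E)=0$ to verify $\pdiv H=E$, Plancherel for the derivative bounds, and the boundedness of $\calF(E)$ from the compact support combined with the integrability of $|\xi|^{-2}$ near the origin for $N\ge3$. Your write-up is merely a bit more explicit about the split at $|\xi|=1$ and the pointwise equality $|R\calF(E)|=|\xi|\,|\calF(E)|$.
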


\begin{proof}
Let $E\in\xdqn(\rN)$ with $\supp E\subset U_\varrho$\,. By Fourier's transformation we get for the Euclidian components of 
$E=E_{I}\pd x^{I}$
$$\calF E_{I}\in\pc{0}(\rN)\qqtext{,}
\forall\,x\in\rN\quad\big|\calF E_I(x)\big|\le \lambda(U_\varrho)^{1/2}\normu{E}{\xlq(\rN)}\qquad,$$
where $\lambda$ denotes Lebesgue's measure. Hence, all components of $\calF E$ are bounded. 
Let $\hat{H}:=r^{-2}R\calF E$ with $\hat{H}(0):=0$\,. The estimate
$$\big|\hat{H}_J(x)\big|\le c|x|^{-1}\sum_{I}\big|\calF E_I(x)\big|$$
holds for all $x\in\rN\ohne\{0\}$ and all indices $J$ and implies $\mathcal{X}_n\hat{H}\in\lzqpe(\rN)$\,.
Hence, $\hat{H},\calF^{-1}\hat{H}\in\xlqp(\rN)$ since $N\ge3$\,. Moreover, we get
$$\norm{\hat{H}}_{\lzqpe(\rN)}+\norm{r\hat{H}}_{\lzqpe(\rN)}\leq c\norm{E}_{\lzq(\rN)}\qquad.$$
Thus by \eqref{fouriersieben} 
$H:=-\ie\calF^\me\hat{H}\in\xh{1}{q+1}{}(\rN)$ with $\normu{H}{\xh{1}{q+1}{}(\rN)}\le c\normu{E}{\xlq(\rN)}$\,.
Using \eqref{fouriervier} as well as \eqref{RTformula} we finally obtain
$$\pdiv H=\calF^\me T\hat{H}=\calF^\me r^{-2}TR\calF E=E$$
because $\pdiv E=0$ yields $T\calF E=0$ again by \eqref{fouriervier}.
\end{proof}

To prepare the final lemma of the appendix let $U\subset\rN$ and
$$\Phi=\sum_{I}\Phi_I\dx^{I}\in \xlq(U)\qquad.$$
Then $\Phi=\Phi^\tau+\Phi^\rho$ is an (pointwise) orthogonal decomposition, where
\beq\Phi^\tau:=\sum_{N\not\in I}\Phi_I\dx^{I}\qqtext{,}
\Phi^\rho:=\sum_{N\in I}\Phi_I\dx^I\qquad.\mylabel{taurhodecodef}\eeq

\begin{lem}\mylabel{hmschluss}
Let $U\subset\rN$\,, $m\in\nz$\,, $E\in\lzq(U)$ and $\eps$ be a $\pc{m}$-admissible transformation.
Furthermore, let $E^\tau$ and $(\eps E)^\rho$ be elements of $\xh{m}{q}{}(U)$\,. 
Then $E$ belongs to $\xh{m}{q}{}(U)$ as well.
\end{lem}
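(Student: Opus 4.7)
Since $E = E^\tau + E^\rho$ pointwise and $E^\tau \in \xh{m}{q}(U)$ by hypothesis, it suffices to show $E^\rho \in \xh{m}{q}(U)$. The strategy is to invert the ``normal--normal'' block of $\eps$ and solve algebraically for $E^\rho$ in terms of $(\eps E)^\rho$ and $E^\tau$.

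With respect to the pointwise orthogonal decomposition of $q$-forms into the subspace spanned by $\dx^I$ with $N \notin I$ (the $\tau$-part) and the one spanned by $\dx^I$ with $N \in I$ (the $\rho$-part), I write $\eps$ in $2 \times 2$-block form and introduce the restricted transformations $\eps^{\sigma\sigma'}$ for $\sigma, \sigma' \in \{\tau, \rho\}$, defined in the obvious way by extracting matrix entries of $\eps$ in the basis $\{\dx^I\}$. This yields the pointwise identity
$$(\eps E)^\rho = \eps^{\rho\tau} E^\tau + \eps^{\rho\rho} E^\rho,$$
and all entries of $\eps^{\sigma\sigma'}$ inherit the $\com{m}$-bounds of $\eps$ together with all their derivatives up to order $m$.

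The crucial point is that $\eps^{\rho\rho}$ is pointwise invertible as a symmetric, uniformly positive definite map on the $\rho$-subspace, with an inverse that is again $\pc{m}$-admissible. Indeed, admissibility of $\eps$ tested on $\lzq$-forms localized near $x$ yields pointwise symmetry and uniform positive definiteness of $\eps(x)$; and for any form $v$ of $\rho$-type the pointwise orthogonality of the $\tau$- and $\rho$-parts gives $\skp{\eps(x) v}{v}_q = \skp{\eps^{\rho\rho}(x) v}{v}_q \geq c|v|_q^2$ with a constant $c>0$ independent of $x$. Consequently $\det \eps^{\rho\rho}(x)$ is uniformly bounded below, and by Cramer's rule $(\eps^{\rho\rho})^{-1}$ has $\com{m}$-entries which are bounded together with all their derivatives up to order $m$.

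With these ingredients in place I solve
$$E^\rho = (\eps^{\rho\rho})^{-1}\bigl((\eps E)^\rho - \eps^{\rho\tau} E^\tau\bigr).$$
Both $(\eps E)^\rho$ and $E^\tau$ lie in $\xh{m}{q}(U)$ by hypothesis, and multiplication by a matrix-valued $\com{m}$-function whose derivatives up to order $m$ are bounded preserves $\xh{m}{q}(U)$, by Leibniz' rule applied componentwise. Hence $E^\rho$, and therefore $E = E^\tau + E^\rho$, belongs to $\xh{m}{q}(U)$. The main obstacle is verifying the $\pc{m}$-admissibility of the block inverse $(\eps^{\rho\rho})^{-1}$; once pointwise positive definiteness and $\com{m}$-regularity of $\eps^{\rho\rho}$ are secured, everything else reduces to standard bookkeeping based on the uniform determinant lower bound and the Leibniz/quotient rule.
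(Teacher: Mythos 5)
Your proof is correct and takes essentially the same route as the paper, which likewise rearranges the identity $(\eps E^{\rho})^{\rho}=(\eps E)^{\rho}-(\eps E^{\tau})^{\rho}$ and inverts the pointwise invertible normal--normal block $\eps^{\rho,\rho}$ with $\pc{m}$-entries. Your additional verification that $\eps^{\rho\rho}$ inherits uniform positive definiteness on the $\rho$-subspace (and hence has a well-behaved inverse via Cramer's rule) is a detail the paper merely asserts, and it is argued correctly.
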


\begin{proof}
We have $(\eps E^{\rho})^{\rho}=(\eps E)^{\rho}-(\eps E^{\tau})^{\rho}\in\xh{m}{q}{}(U)$\,.
Since the restriction $\eps^{\rho,\rho}$ of $\eps$ acting on the normal parts, 
i.e. $\eps^{{}\rho,\rho}E^\rho=(\eps E^{\rho})^{\rho}$\,,
is pointwise invertible with $\xsc{m}{}(\ol{U})$ entries we obtain $E^\rho\in\xh{m}{q}{}(U)$\,.
\end{proof}

\section{Translation to the classical electro-magnetic language}

Finally we present our results in the classical language of vector analysis, i.e. $M=\rd$\,.
By the usual identifications we have to following table:
\begin{center}
\fbox{\begin{tabular}{|c||c|c|c|c|}
\hline
& $q=0$ & $q=1$ & $q=2$ & $q=3$\\
\hline\hline
$\rot$ & $\grad$ & $\curl$ & $\pkdiv$ & $0$\\
\hline
$\pdiv$ & $0$ & $\pkdiv$ & $-\curl$ & $\grad$\\
\hline
\end{tabular}}
\end{center}

Now we deal with the usual Sobolev spaces $\hm(\om)$ and $\h{}{}{}(\curl,\om)$\,, $\h{}{}{}(\pkdiv,\om)$
as well as the trace spaces $\h{m-1/2}{}{}(\dom)$\,. Moreover, we have the weighted Sobolev spaces
$\hms(\om)$\,, $\Hms(\om)$ as well as for $\Diamond\in\{\curl,\pkdiv\}$
$$\H{}{s}{}(\Diamond,\om):=\setb{E\in\Lzsom}{\Diamond E\in\Lzsom}\qquad.$$

\begin{center}
\fbox{\begin{tabular}{|c||c|c|c|c|}
\hline
& $q=0$ & $q=1$ & $q=2$ & $q=3$\\
\hline\hline
$\Rqsom$ & $\H{1}{s}{}(\om)$ & $\H{}{s}{}(\curl,\om)$ & $\H{}{s}{}(\pkdiv,\om)$ & $\Lzsom$\\
\hline
$\Dqsom$ & $\Lzsom$ & $\H{}{s}{}(\pkdiv,\om)$ & $\H{}{s}{}(\curl,\om)$ & $\H{1}{s}{}(\om)$\\
\hline
\end{tabular}}
\end{center}

\begin{theo}\mylabel{regularitybdclass}
Let $m\in\nzn$\,, $\om$ be a bounded domain in $\rd$ with $\pc{m+2}$-boundary
as well as $\eps$ be some $\pc{m+1}$-admissible matrix. Furthermore, let
$$E\in\h{}{}{}(\curl,\om)\cap\eps^\me\h{}{}{}(\pkdiv,\om)$$
with
$$\curl E\in\hmom\qtext{,}\pkdiv\eps E\in\hmom\qtext{,}\nu\times E\in\h{m+1/2}{}{}(\dom)\quad.$$
Then $E\in\hom{m+1}{}{}$ and there exists a positive constant $c$ independent of $E$\,, such that
\begin{align*}
&\qquad\norm{E}_{\hom{m+1}{}{}}\\
&\leq c\big(\norm{E}_{\lzom}+\norm{\curl E}_{\hmom}+\norm{\pkdiv\eps E}_{\hmom}
+\norm{\nu\times E}_{\h{m+1/2}{}{}(\dom)}\big)\quad.
\end{align*}
\end{theo}

This theorem may be regarded as a generalization to inhomogeneous boundary data of \cite{weberreg},
whereas the next theorem represents a new result even in the classical context.

\begin{theo}\mylabel{regularityunbdclass}
Let $s\in\rz$\,, $m\in\nzn$\,, $\Omega\subset\rd$ be an exterior domain with $\pc{m+2}$-boundary
and $\eps$ be some $\pc{m+1}$-admissible matrix. Furthermore, let
$$E\in\H{}{s}{}(\curl,\om)\cap\eps^\me\H{}{s}{}(\pkdiv,\om)\qqtext{with}\nu\times E\in\h{m+1/2}{}{}(\dom)\qquad.$$
\begin{itemize}
\item[\rm\bf (i)] Then $\curl E\in\Hmsom$ and $\pkdiv\eps E\in\Hmsom$ imply $E\in\Hom{m+1}{s}{}$ and with some constant $c>0$
\begin{align*}
&\qquad\norm{E}_{\Hom{m+1}{s}{}}\\
&\leq c\big(\norm{E}_{\Lzsom}+\norm{\curl E}_{\Hmsom}
+\norm{\pkdiv\eps E}_{\Hmsom}+\norm{\nu\times E}_{\h{m+1/2}{}{}(\dom)}\big)
\end{align*}
holds uniformly with respect to $E$\,.
\item[\rm\bf (ii)] If additionally $\eps$ is $0$-$\pc{m+1}$-admissible of second kind and $\tau$-$\pc{0}$-admissible
of first (or second) kind with some $\tau>0$
then $\curl E\in\hom{m}{s+1}{}$ and $\pkdiv\eps E\in\hom{m}{s+1}{}$ imply $E\in\hom{m+1}{s}{}$
and there exists some positive constant $c$\,, such that the estimate
\begin{align*}
&\qquad\norm{E}_{\hom{m+1}{s}{}}\\
&\leq c\big(\norm{E}_{\Lzsom}+\norm{\curl E}_{\hom{m}{s+1}{}}
+\norm{\pkdiv\eps E}_{\hom{m}{s+1}{}}+\norm{\nu\times E}_{\h{m+1/2}{}{}(\dom)}\big)
\end{align*}
holds uniformly with respect to $E$\,.
\end{itemize}
\end{theo}

Here we denoted by $\nu$ the exterior normal unit vector at $\dom$\,.

\begin{rem}\mylabel{regularityremclass}
Similar results hold for kinds of spaces like $\eps^\me\h{}{}{}(\curl,\om)\cap\h{}{}{}(\pkdiv,\om)$
and/or with prescribed normal traces $\nu\cdot E$ resp. $\nu\cdot\eps E$\,.
\end{rem}

\begin{acknow}
This research was supported by the {\it Deutsche Forschungsgemeinschaft}
via the project {\sf `We 2394: Untersuchungen der Spektralschar verallgemeinerter
Maxwell-Operatoren in unbeschr\"ankten Gebieten'} and by the {\it Department of Mathematical
Information Technology} of the University of Jyv\"askyl\"a, Finland, where the second author spent
a sabbatical of three month during spring 2007 as a postdoc fellow.

The authors are particularly indebted to their academic teachers Norbert Weck and Karl-Josef Witsch 
for introducing them to the field.
\end{acknow}

\end{document}